\theoremstyle{change}%
\newtheorem{definition}{Definition:}[section]%
\newtheorem{proposition}[definition]{Proposition:}%
\newtheorem{theorem}[definition]{Theorem:}%
\newtheorem{lemma}[definition]{Lemma:}%
{\theorembodyfont{\rmfamily}\newtheorem{remark}[definition]{Remark:}}%
{\theorembodyfont{\rmfamily}}%
\newenvironment{proof}
{{\bf Proof:}}
{\qquad \hspace*{\fill} $\Box$}%
\newcommand{\fg}{\mathfrak{g}}%
\newcommand{\fe}{\mathfrak{e}}%
\newcommand{\tr}{\operatorname{tr}}%
\newcommand{\inner}{\operatorname{int}}%
\newcommand{\cl}{\operatorname{cl}}%
\newcommand{\rme}{\mathrm{e}}%
\newcommand{\CC}{\mathcal{C}}%
\newcommand{\EC}{\mathcal{E}}%
\newcommand{\OC}{\mathcal{O}}%
\newcommand{\UC}{\mathcal{U}}%
\newcommand{\XC}{\mathcal{X}}%
\newcommand{\N}{\mathbb{N}}%
\newcommand{\R}{\mathbb{R}}%
\newcommand{\Z}{\mathbb{Z}}%
\begin{document}

\title{Dynamics of LCSs on the group of proper motions}
\author{V\'{\i}ctor Ayala
\thanks{
Supported by Proyecto Fondecyt n${{}^\circ}$ 1190142. Conicyt, Chile.}  \\
Instituto de Alta Investigaci\'{o}n\\
Universidad de Tarapac\'{a}, Arica, Chile \and  Adriano Da Silva \\
		Departamento de Matem\'atica,\\Universidad de Tarapac\'a - Arica, Chile.
		\and
		Alejandro Otero Robles \\
		Instituto de Matem\'{a}tica\\
		Universidade Estadual de Campinas, Brazil\\
}
\date{\today }
\maketitle

\begin{abstract}
 This article describes the control behavior of any linear control systems on the group of proper motions $SE(2)$. It characterizes the controllability property and the control sets of the system. 
\end{abstract}

{\small {\bf Keywords:} Controllability, control sets, proper motions} 
	
{\small {\bf Mathematics Subject Classification (2010): 93B05, 93C05, 83C40.}}%

\section{Introduction}

Consider a control system on a differential manifold $M$. Any specific control determines a vector field, i.e. an ordinary differential equation on $M$. Thus, geometrically, a control system on $M$ can be seen as a family of differential equations parameterized by the set of the admissible controls.

A fundamental issue of a control system is the controllability property, which means the capacity of the system to send each initial condition to any desired state of the manifold. It happens through a finite concatenation of integral curves of the corresponding differential equations, determined by the admissible control which connects both states. And taking into account the irreversible nature of time. A more realistic property is the notion of a control set, which
is a region of $M$ where the controllability property holds in its interior. 

The classical linear control system is defined on $\mathbb{R}^n$, an Abelian Lie group. In the Geometric Control Theory context, a linear control system on a connected Lie group $G$ strongly depends on the group product, its Lie algebra $\fg$, and the exponential map connecting these structures. Furthermore, these problems naturally involves differential equations, differential topology, geometry and analysis.

This article describes the control behavior of any linear control systems on the proper motion group $SE(2)$. Precisely, it characterizes the controllability property and the control sets of the system.

As the group of all isometries of the plane preserving orientation, we consider $SE(2)$ as the  Lie group given by the semi-direct product of the circle $S^1$ and the Euclidean plane $\R^2$. After describing the general form of any linear and left-invariant vector fields on $SE(2)$, we decompose the controlled family of differential
equations induced by the system on the product $S^1\times\R^2$. It turns out that the original linear control system is equivalent to the product of a homogeneous system on the circle $S^1$ and an control-affine system $\Sigma_{\R^2}$ on the plane.

As a consequence of this decomposition, the control sets of our original system on $SE(2)$ can be obtained by lifting the respective control sets for the system on $\Sigma_{\R^2}$. Therefore, a throughfully analisys of the system $\Sigma_{\R^2}$ is made on Section 3. Since any equilibria point is contained in a control set, we start our analisys by computing the equilibria set $\EC$ of $\Sigma_{\R^2}$. Depending on the spectra of the drift, $\EC$ is an interval containing the origin or it lies on a well-determined circumference that plays a relevant role. In fact, the region determined by a second circumference, tangent to $\EC$, have invariant properties. From that, we characterize the controllability property and the control sets of the system $\Sigma_{\R^2}$.

Even though this article contains just mathematical results, we mention that the modern theory of control systems on Lie groups has relevant applications in many branches. For instance, there is a Human body representation in terms of rigid-body motion, with the vertebral column represented as a chain of 26 flexibly-coupled motion groups, \cite{Iv, Iv2}. See also \cite{Jurd} for other mechanic applications.

\section{Preliminaries}

This section introduces the basic notions of control-affine systems, control sets and linear control systems on $SE(2)$, and also to show some preliminaries results we will be using ahead. For more on the subjects, the reader should consult references \cite{DSAy1, CK, Jurd}.

	\subsection{Control-affine systems, controllability and control sets}
	
	Let $M$ be a finite dimensional smooth manifold. A \emph{control-affine system} in $M$ is determined by the family of ODE's
	\begin{flalign*}
		&&\dot{x}(s)=f_0(x(s))+\sum_{j=1}^mu_j(s)f_j(x(s)),  \;\;\;\;{\bf u}=(u_1, \ldots, u_m)\in\mathcal{U}&&\hspace{-1cm}\left(\Sigma_{M}\right)
	\end{flalign*}
	here, $f_0, f_1, \ldots, f_m$ are smooth vector fields on $M$, and $\UC$ is the set of the piecewise constant functions satisfying ${\bf u} (t)\in\Omega$ where $u\subset \R^m$ is a compact, convex subset satisfying $0\in\inner\Omega$. For any $x\in M$ and
	${\bf u}\in \UC$, the solution of $\Sigma_M$ is the unique curve $t\mapsto \varphi(t, x, {\bf u})$ on $M$ satisfying $\varphi(0, x, {\bf u})=x$. For $x\in M$ the {\it positive} and {\it negative orbits} of $\Sigma_M$ at $x$ are defined as  
	$$\mathcal{O}^+(x)=\{\varphi(t, x, {\bf u}), t\geq 0, {\bf u}\in\UC\}\;\;\mbox{ and }\;\;\mathcal{O}^-(x)=\{\varphi(-t, x, {\bf u}), t\geq 0, {\bf u}\in\UC\},$$
	respectively. We say that $\Sigma_M$ satisfies the Lie algebra rank condition (LARC) if the Lie algebra $\mathcal{L}$ generated by the vector
	fields $f_0, f_1, \ldots, f_m$, satisfies $\mathcal{L}(x)=T_{x}M$ for all $x\in M$. The system $\Sigma_M$ is said to {\it controllable} if $M=\OC^+(x)$ for all $x\in M$.

	\begin{definition}
	\label{control}
	A set $D\subset M$ is a \emph{control set} of $\Sigma_M$ if it is maximal, w.r.t. set inclusion, with the following properties: 
	
	\begin{enumerate}
		\item For any $x\in D$, there is ${\bf u}\in \mathcal{U}$ with $\varphi(\mathbb{R}_{+},x, {\bf u}) \subset D$;
		
		\item For any $x\in D$, it holds that $D\subset \overline{\mathcal{O}^{+}(x)}$.
	\end{enumerate}
	
	\end{definition}
	
		Let $N$ be another smooth manifold and  
	\begin{flalign*}
		&&\dot{y}(t)=g_0(x(t))+\sum_{j=1}^mu_j(t)g_j(y(t)),  \;\;\;\;{\bf u}\in\mathcal{U}&&\hspace{-1cm}\left(\Sigma_{N}\right)
	\end{flalign*}
	a control-affine system on $N$. If $\psi:M\rightarrow N$ is a smooth map, we say that $\Sigma_M$ and $\Sigma_N$ are $\psi$-conjugated if their  respective vector fields are $\psi$-conjugated, that is, 
	$$\psi_*f_j=g_j\circ\psi, \;\;\;\;j=0, 1,\ldots, m.$$
	If such $\psi$ exists, we say that $\Sigma_M$ and $\Sigma_N$ are conjugated. If $\psi$ is a diffeomorphism, $\Sigma_M$ and $\Sigma_N$ are called {\it equivalent}. It is straightforward that the image by $\psi$ of a control set of $\Sigma_M$ is contained in a control set of $\Sigma_N$.


\subsection{Linear control systems on $SE(2)$}

The group of proper motions $SE(2)$ is the group of all isometries of $\R^2$, which preserves orientation. As a Lie group, $SE(2)$ can be seen as the semi-direct product $SE(2)=S^1\times_{\rho}\R^2$, with product given by 
$$(t_1, v_1)*( t_2, v_2)=(t_1+t_2, v_1+\rho_{t_1}v_2),$$
where $S^1=\R/2\pi\Z$ and $\rho_{t}$ is the counter-clockwise rotation of $t$-degrees.
The Lie algebra of $SE(2)$ is given by the semi-direct product $\fe(2)=\R\times_{\theta}\R^2$ with bracket determined by the relation\footnote{This bracket is obtained by considering the left-invariant vector fields on $SE(2)$.}
$$[(1, 0), (0, \eta)]=(0, \theta\eta)\;\;\forall \eta\in\R^2, \hspace{1cm}\mbox{ where }\hspace{1cm}\theta=\left(\begin{array}{cc}
    0 & -1 \\
    1 & 0
\end{array}\right).$$
The relationship between the group product and the bracket comes from the equality $\rho_t=\rme^{t\theta}$.

Following \cite[Proposition 3.4]{DSAy1}, a {\it linear vector field} on $SE(2)$ has the following expression
$$\XC(t, v)=(0, Av+\Lambda_t\xi),$$
where $\xi\in\R^2$, $A\in \mathfrak{gl}(2, \R)$ satisfies $A\theta=\theta A$ and  
$$\Lambda_t\xi:=(1-\rho_t)\theta\xi.$$
Since a linear vector field is entirely determined by the matrix $A\in\mathfrak{gl}(2, \R)$ and the vector $\xi\in\R^2$, we will use the notation $\XC=(\xi, A)$ to represent a linear vector field.


A {\it left-invariant} vector field is given by 
$$Y^L(t, v)=(\alpha, \rho_t\eta), \;\;\;\mbox{ for some }\;\;\;(\alpha, \eta)\in\R\times\R^2.$$
We can now define linear control systems on $SE(2)$.

\begin{definition}
A (one-input) linear control system (LCS) on $SE(2)$ is the family of ODE's 
given by 
\begin{flalign*}
	  && \dot{g}=\XC(g)+uY^L(g), \;\;\;g=(t, v)\in E(2)\;\;\mbox{ and }\;\; u\in\Omega&&\hspace{-1cm}\left(\Sigma_{SE(2)}\right)
	  \end{flalign*}

with $\Omega=[u^-, u^+]$ and $u^-<0<u^+$. 
\end{definition}

By a straightforward calculation between the brackets of $\XC$ and $Y^L$, one obtains that 
\begin{equation}
\label{LARC}
\Sigma_{SE(2)} \mbox{ satisfies the LARC }\;\;\;\iff\;\;\; \alpha\neq 0 \;\;\mbox{ and }\;\; \alpha\xi+A\eta\neq 0.
\end{equation}

The next result shows that a LCS on the group of proper motions is equivalent to the product of a homogeneous system in $S^1$ by an affine system on $\R^2$.

\begin{proposition}
\label{conjugation}
Let $\Sigma_{SE(2)}$ be a LCS on $SE(2)$. Assume the associated linear vector field $\XC=(\xi, A)$ of $\Sigma_{SE(2)}$ satisfy $\det A\neq 0$. Therefore, $\Sigma_{SE(2)}$ is equivalent to a system on $S^1\times\R^2$ of the form
 \begin{flalign*}
	  &&\left\{\begin{array}{l}
     \dot{t}=u\alpha\\
     \dot{v}=(A-u\alpha\theta)v+u\eta
\end{array}\right.  &&\hspace{-1cm}\left(\Sigma_{S^1\times\R^2}\right)
	  \end{flalign*}

\end{proposition}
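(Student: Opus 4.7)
The plan is to produce an explicit diffeomorphism $\psi\colon S^1\times\R^2 \to S^1\times\R^2$ conjugating $\Sigma_{SE(2)}$ to $\Sigma_{S^1\times\R^2}$. A preliminary observation is that since $\dot t = u\alpha$ holds in both systems independently of $v$, the $t$-component of $\psi$ is forced to be the identity. I therefore search for $\psi$ of the form $\psi(t,v) = (t, F(t,v))$ with $F(t,\cdot)$ an affine isomorphism of $\R^2$ for each $t$.

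The heuristic driving the choice of $F$ is twofold: the $\rho_t$-twist in $Y^L(t,v) = (\alpha,\rho_t\eta)$ should be undone by a factor $\rho_{-t}$, and the $t$-dependent drift $\Lambda_t\xi = (1-\rho_t)\theta\xi$ in $\XC$ should be absorbed into a $t$-dependent translation suggested by the $u=0$ equilibrium curve of the $v$-equation. Concretely, I propose
$$\psi(t,v) = \bigl(t,\; \rho_{-t}v + A^{-1}(\rho_{-t}-I)\theta\xi\bigr),$$
where $I$ is the $2\times 2$ identity. The hypothesis $\det A\neq 0$ enters precisely here, to define the translation term, and the commutativity $A\theta = \theta A$ (equivalently, $\rho_t A = A\rho_t$ for every $t$) keeps everything compatible with the rotational part throughout.

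The verification is a direct differentiation along trajectories of $\Sigma_{SE(2)}$. Setting $\hat v(s) = \rho_{-t(s)}v(s) + A^{-1}(\rho_{-t(s)}-I)\theta\xi$ and using $\tfrac{d}{dt}\rho_{-t} = -\theta\rho_{-t}$, $\theta^2 = -I$, and $\rho_{-t}A = A\rho_{-t}$, I substitute $\dot t = u\alpha$ and $\dot v = Av + (1-\rho_t)\theta\xi + u\rho_t\eta$. Grouping terms, the $u$-independent terms collapse because $A\cdot A^{-1}(\rho_{-t}-I)\theta\xi$ exactly cancels $\rho_{-t}(1-\rho_t)\theta\xi$; the contribution $\rho_{-t}\cdot u\rho_t\eta$ reduces to $u\eta$; and the terms arising from differentiating the $\rho_{-t}$ factor reassemble as $-u\alpha\theta\hat v$ once $\rho_{-t}v$ is re-expressed in terms of $\hat v$. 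The net result is $\dot{\hat v} = (A-u\alpha\theta)\hat v + u\eta$, which is $\Sigma_{S^1\times\R^2}$.

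The main obstacle is guessing the form of $\psi$: the rotational factor $\rho_{-t}$ is forced by the need to cancel the $\rho_t$ appearing in $Y^L$, but identifying the exact translation term requires either solving a compatibility system for the conjugacy relations $\psi_*\XC = (0,Av)\circ\psi$ and $\psi_*Y^L = (\alpha,-\alpha\theta v+\eta)\circ\psi$, or recognizing $A^{-1}(1-\rho_t)\theta\xi$ as the equilibrium of the drift-only $v$-equation. Once the ansatz is in place the rest is a careful but mechanical computation resting only on $\det A\neq 0$ and $A\theta = \theta A$.
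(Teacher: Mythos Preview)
Your diffeomorphism $\psi(t,v)=(t,\rho_{-t}v+A^{-1}(\rho_{-t}-I)\theta\xi)$ is exactly the composite $\psi_2\circ\psi_1$ of the two maps the paper uses, namely $\psi_1(t,v)=(t,v+\Lambda_tA^{-1}\xi)$ (which kills the $\Lambda_t\xi$ drift) followed by $\psi_2(t,v)=(t,\rho_{-t}v)$ (which untwists $\rho_t$ in $Y^L$). So the approach is the same; you have simply merged the two steps into one.

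One bookkeeping slip: the terms coming from differentiating the $\rho_{-t}$ factors do \emph{not} reassemble exactly as $-u\alpha\theta\hat v$. A short computation gives
\[
-u\alpha\theta\rho_{-t}v \;+\; u\alpha A^{-1}\rho_{-t}\xi \;=\; -u\alpha\theta\hat v \;+\; u\alpha A^{-1}\xi,
\]
so there is a leftover constant term $u\alpha A^{-1}\xi$. The net equation is therefore $\dot{\hat v}=(A-u\alpha\theta)\hat v+u(\eta+\alpha A^{-1}\xi)$, not $+u\eta$. This does not harm the proposition, which only claims a system ``of the form'' $\Sigma_{S^1\times\R^2}$ for \emph{some} $\eta$; indeed the paper explicitly redefines $\eta:=\alpha A^{-1}\xi+\eta_1$ after applying $\psi_1$. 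Just be aware that the $\eta$ appearing in $\Sigma_{S^1\times\R^2}$ is not the original control vector from $Y^L$.
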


\begin{proof}
Let us assume that $\Sigma_{SE(2)}$ is determined by the vectors 
$$\XC(t, v)=(0, Av+\Lambda_t\xi)\;\;\;\mbox{ and }\;\;\; Y^L(t, v)=(\alpha, \rho_t\eta_1), \;\;\;\alpha\neq 0.$$
Since $\det A\neq 0$ the map 
$$\psi_1:SE(2)\rightarrow SE(2), \;\;\;\; \psi_1(t, v)=(t, v+\Lambda_tA^{-1}\xi),$$
is a well defined diffeomorphism of $E(2)$ satisfying
$$\forall (a, w)\in T_{(t, v)}SE(2), \;\;\;\;\;(d\psi_1)_{(t, v)}(a, w)=(a, a\rho_tA^{-1}\xi+w).$$
On the other hand, 
$$A\theta=\theta A\;\;\;\;\mbox{ implies that }\;\;\;\;\forall t\in\R, w\in\R^2, \;\;\; A\Lambda_t w=\Lambda_t Aw,$$
and hence,  
$$(d\psi_1)_{(t, v)}\XC(t, v)=\XC_0(\psi_1(t, v))\;\;\mbox{ and }\;\;(d\psi_1)_{(t, v)}Y^L(t, v)=Y^L_0(\psi_1(t, v)),$$
where $\XC_0(t, v):=(0, Av)$ and $Y_0^L(t, v)=(\alpha, \rho_t\eta),$ for $\eta=\alpha A^{-1}\xi+\eta_1$. It turns out that $\Sigma_{SE(2)}$ is equivalent to the LCS $\Sigma^0_{SE(2)}$ on $SE(2)$ determined by $\XC_0$ and $Y^L_0$.

Next, we consider the application
$$\psi_2:SE(2)\rightarrow SE(2), \;\;\;\;\psi_2(t, v)=(t, \rho_{-t}v).$$
It is straightforward to see that $\psi_2$ is a diffeomorphism satisfying 
$$\forall (a, w)\in T_{(t, v)}SE(2), \;\;\;\;\;(d\psi_2)_{(t, v)}(a, w)=(a, -a\theta\rho_{-t}v+\rho_{-t}w).$$
Consequently,
$$(d\psi_2)_{(t, v)}\XC_0(t, v)=\XC_0(\psi_2(t, v))\;\;\mbox{ and }\;\; (d\psi_2)_{(t, v)}Y_0^L(t, v)=Z(\psi_2(t, v)),$$
where $Z(t, v)=(\alpha, -\alpha \theta v+\eta)$.

 Therefore, $\Sigma^0_{SE(2)}$ is equivalent to the system on $S^1\times\R^2$ given by
 \begin{flalign*}
	  &&\left\{\begin{array}{l}
     \dot{t}=u\alpha \\
     \dot{v}=(A-u\alpha\theta)v+u\eta
\end{array}\right., \; u\in\Omega &&\hspace{-1cm}\left(\Sigma_{S^1\times\R^2}\right)
	  \end{flalign*}
concluding the proof.
\end{proof}

\begin{remark}
The previous proposition shows that the dynamics of a LCS on $SE(2)$ whose associated linear vector field $\XC=(\xi, A)$ satisfy $\det A\neq 0$, is the same as the dynamics of the product of a homogeneous system on $S^1$ and a particular class of control-affine systems on $\R^2$.
\end{remark}

\section{A particular class of control-affine systems on $\R^2$}

In this section, we discuss controllability and control sets of the particular class of control-affine systems on $\R^2$ which we obtained in the previous section.

Let $A, \theta\in\mathrm{gl}(2, \R)$ be the matrices given by 
$$A=\left(\begin{array}{cc}
	\lambda & -\mu \\
	\mu & \lambda
\end{array}\right)\;\;\mbox{ and }\;\;\theta=\left(\begin{array}{cc}
	0 & -1 \\
	1 & 0
\end{array}\right)\;\;\;\mbox{ with }\;\;\;\lambda^2+\mu^2\neq 0.$$

Consider the control-affine system on $\R^2$ given by 
\begin{flalign*}
	&&\dot{v}=(A-u\theta)v+u\eta,\;\;\;\;\;u\in\Omega &&\hspace{-1cm}\left(\Sigma_{\R^2}\right)
\end{flalign*}
where $\eta\in\R^2$ is a fixed nonzero vector. Defining $A(u):=A-u\theta$, the previous choices of $A$ and $\theta$ gives us 
$$\tr A(u)=\tr A= 2\lambda\hspace{1cm}\mbox{ and }\hspace{1cm}\det A(u)=\lambda^2+(\mu-u)^2,$$
implying, by our choices, that   
$$\det A(u)\neq 0 \;\;\;\iff\;\;\; \lambda\neq 0 \;\;\mbox{ or }\;\;\; u\neq\mu.$$
Moreover, if $\det A(u)\neq 0$, the solutions of $\Sigma_{\R^2}$ are builded through concatenations of the solutions for constant controls $u\in\Omega$
$$\varphi(s, v, u)=\rme^{s\lambda}R_{s(\mu-u)}(v-v(u))+v(u), \;\;\;\mbox{ where }\;\;\; v(u)=-A(u)^{-1}\eta,$$
and $R_{s(\mu-u)}$ is the rotation by $s(\mu-u)$, which is clockwise if $s(\mu-u)<0$ and counter-clockwise if $s(\mu-u)>0$.

\subsection{Equilibria of $\Sigma_{\R^2}$}

 In this section, we analyze the set $\EC$ of the equilibrium points of the system $\Sigma_{R^2}$. The importance of such a set comes from the fact that any point in $\EC$ certainly satisfies conditions 1. and 2. of Definition \ref{control} and hence, is contained in a control set of $\EC$ .

The equilibrium are the points $v\in\mathbb{R}^2$ such that 
$$A(u)v+u\eta=0, \mbox{ for some }u\in\Omega.$$
If $\det A(u)\neq 0$ the equilibria are given by 
\begin{equation}
    \label{equilibria}
    v(u)=-uA(u)^{-1}\eta=\frac{-u}{\lambda^2+(\mu-u)^2}\bigl(\lambda\eta-(\mu-u)\eta^*\bigr).
\end{equation}

Next, we describe some geometrical features of the set $\EC$.

\begin{proposition}
    For the set of equilibria $\EC$, it holds that:
    \begin{itemize}
        \item[1.] $\tr A=0$ and $\EC$ is an interval in the line $\R\cdot\eta^*$, containing the origin;
        \item[2.] $\tr A\neq 0$ and $\EC$ lies on the circumference with center $\zeta$ and radius $R$ (Figure \ref{figura2}) given, respectively, by
        $$\zeta=-\frac{1}{2}\left(\frac{\mu}{\lambda}\eta+\eta^*\right)\hspace{1cm}R=\frac{1}{2}\sqrt{\frac{\mu^2+\lambda^2}{\lambda^2}}|\eta|$$
    \end{itemize}
\end{proposition}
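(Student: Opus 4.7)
The plan is to argue directly from the explicit parametrization \eqref{equilibria} of $\EC$, splitting by whether $\lambda$ vanishes, and using throughout that $\eta^*=\theta\eta$ is orthogonal to $\eta$ with $|\eta^*|=|\eta|$, so that $|a\eta+b\eta^*|^2=(a^2+b^2)|\eta|^2$ for all scalars $a,b$.

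For item 1, the hypothesis $\tr A=2\lambda=0$ combined with $\lambda^2+\mu^2\neq 0$ forces $\lambda=0$ and $\mu\neq 0$. Substituting $\lambda=0$ into \eqref{equilibria} kills the $\eta$-component and leaves $v(u)=\frac{u}{\mu-u}\,\eta^*$ for $u\in\Omega$ with $u\neq\mu$; at the possibly missing point $u=\mu$, the equilibrium equation $A(\mu)v+\mu\eta=0$ degenerates to $\mu\eta=0$, which is impossible, so it contributes nothing. Since $u\mapsto u/(\mu-u)$ has derivative $\mu/(\mu-u)^2$ of constant sign and vanishes at $0\in\inner\Omega$, its image is a connected interval in $\R\cdot\eta^*$ containing the origin, which is exactly $\EC$.

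For item 2, with $\lambda\neq 0$ the determinant $D:=\lambda^2+(\mu-u)^2$ is strictly positive for every $u$, so \eqref{equilibria} defines $v(u)$ on all of $\Omega$, and we write it in the orthogonal frame $\{\eta,\eta^*\}$ as $v(u)=\frac{-u\lambda}{D}\eta+\frac{u(\mu-u)}{D}\eta^*$. The remaining task is to verify $|v(u)-\zeta|^2=R^2$ by direct expansion; the key algebraic observation, which is the main obstacle of the proof, is that the identity $u^2\lambda^2+u^2(\mu-u)^2=u^2 D$ makes the $D^{-2}$ squared terms collapse to $u^2/D$, and this exactly cancels the cross-term sum $-u\mu/D+u(\mu-u)/D=-u^2/D$. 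What survives is the $u$-independent constant $\mu^2/(4\lambda^2)+1/4=(\mu^2+\lambda^2)/(4\lambda^2)$, and this equals $R^2/|\eta|^2$; thus the precise values of $\zeta$ and $R$ given in the statement are exactly those that force this telescoping, and once it is noticed the rest is mechanical bookkeeping.
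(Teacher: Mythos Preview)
Your proof is correct and follows essentially the same approach as the paper: item~2 is the identical direct expansion of $|v(u)-\zeta|^2$ in the orthogonal frame $\{\eta,\eta^*\}$, with the same telescoping cancellation, and for item~1 you actually give more detail than the paper's one-line ``follows direct from equation~\eqref{equilibria}.'' One small caveat on item~1: your connectedness-via-monotonicity argument tacitly assumes $\mu\notin\Omega$, since otherwise the domain $\Omega\setminus\{\mu\}$ is disconnected and the image of $u\mapsto u/(\mu-u)$ need not be a single interval---but the paper's proof does not address this case either.
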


\begin{proof}
    1. It follows direct from equation (\ref{equilibria}).
    
    2. For any $u\in\Omega$, we have
    $$\left|v(u)-\zeta\right|^2=\left|\frac{-u}{\lambda^2+(\mu-u)^2}\bigl(\lambda\eta-(\mu-u)\eta^*\bigr)+\frac{1}{2}\left(\frac{\mu}{\lambda}\eta+\eta^*\right)\right|^2$$
    $$=\left\{\left(\frac{-u\lambda}{\lambda^2+(\mu-u)^2}+\frac{1}{2}\frac{\mu}{\lambda}\right)^2+\left(\frac{u(\mu-u)}{\lambda^2+(\mu-u)^2}+\frac{1}{2}\right)^2\right\}|\eta^2|$$
    $$=\left\{\frac{u^2\lambda^2}{(\lambda^2+(\mu-u)^2)^2}-\frac{u\mu}{\lambda^2+(\mu-u)^2}+\frac{\mu^2}{4\lambda^2}+\frac{u^2(\mu-u)^2}{(\lambda^2+(\mu-u)^2)^2}+\frac{u(\mu-u)}{\lambda^2+(\mu-u)^2}+\frac{1}{4}\right\}|\eta|^2$$
    $$=\left\{\underbrace{\frac{u^2}{\lambda^2+(\mu-u)^2}-\frac{u\mu}{\lambda^2+(\mu-u)^2}+\frac{u(\mu-u)}{\lambda^2+(\mu-u)^2}}_{=0}+\frac{\mu^2+\lambda^2}{4\lambda^2}\right\}|\eta|^2=R^2,$$
    showing item 2.
\end{proof}

\begin{remark}
Note that $-\eta^*$ also belongs to the previous circumference and that 
$$v(u)\rightarrow-\eta^*\;\;\;\mbox{ as }\;\;\;u\rightarrow\pm\infty,$$
Moreover, if $\mu\in\Omega$ we get that $v(\mu)$ and $-\eta^*$ are antipodal points.
\end{remark}

\subsection{Invariant subsets}

In this section, we construct a circumference that contains the equilibria of $\Sigma_{\R^2}$. As we show, the regions delimited by such circumference have some invariant properties which, we will use to obtain properties of the control sets of the system.

\begin{lemma}
\label{technical}
For any $\sigma, \nu\in\R^*$, the function 
$$f_{\sigma, \nu}:\R^*\rightarrow\R, \;\;\;\;f_{\sigma, \nu}(s)=\frac{1-2\rme^{s \sigma}\cos s\nu+\rme^{2\sigma}}{(1-\rme^{s\sigma})^2},$$
satisfies
$$f_{\sigma, \nu}(s)<\lim_{s\rightarrow 0}f_{\sigma, \nu}(s)=\frac{\sigma^2+\nu^2}{\sigma^2}$$
\end{lemma}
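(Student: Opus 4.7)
The plan is to rewrite $f_{\sigma,\nu}(s)$ so that both its limit at $0$ and the direction of the inequality become transparent. Reading the numerator as $1-2e^{s\sigma}\cos(s\nu)+e^{2s\sigma}$ (with the exponent $2\sigma$ the stated limit at $s=0$ would not exist, so this appears to be a typo), I would split off a perfect square:
$$1-2e^{s\sigma}\cos(s\nu)+e^{2s\sigma}=(1-e^{s\sigma})^2+2e^{s\sigma}(1-\cos(s\nu)).$$
Dividing by $(1-e^{s\sigma})^2$ gives
$$f_{\sigma,\nu}(s)=1+\frac{2e^{s\sigma}(1-\cos(s\nu))}{(1-e^{s\sigma})^2}.$$

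Next I would apply the half-angle identities $1-\cos(s\nu)=2\sin^2(s\nu/2)$ and $1-e^{s\sigma}=-2e^{s\sigma/2}\sinh(s\sigma/2)$; after cancelling the common factor $4e^{s\sigma}$ in numerator and denominator, this collapses to the clean form
$$f_{\sigma,\nu}(s)=1+\frac{\sin^2(s\nu/2)}{\sinh^2(s\sigma/2)}.$$
From here the limit at $s=0$ is immediate via $\sin x\sim x$ and $\sinh x\sim x$, yielding $1+\nu^2/\sigma^2=(\sigma^2+\nu^2)/\sigma^2$ as claimed.

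The strict inequality then reduces to the two elementary bounds $|\sin x|<|x|$ and $\sinh|x|>|x|$, valid for every $x\neq 0$. Since $\sigma,\nu\neq 0$, any $s\neq 0$ makes both $s\nu/2$ and $s\sigma/2$ nonzero, so $\sin^2(s\nu/2)<(s\nu/2)^2$ and $\sinh^2(s\sigma/2)>(s\sigma/2)^2$. Dividing these two inequalities produces
$$\frac{\sin^2(s\nu/2)}{\sinh^2(s\sigma/2)}<\frac{(s\nu/2)^2}{(s\sigma/2)^2}=\frac{\nu^2}{\sigma^2},$$
which is exactly $f_{\sigma,\nu}(s)<(\sigma^2+\nu^2)/\sigma^2$.

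I do not anticipate any genuine obstacle: the only mild care needed is with signs, but squaring removes any ambiguity coming from $1-e^{s\sigma}$, and the whole argument boils down to the standard comparison $|\sin|<\mathrm{id}<\sinh$ on the punctured real line. The only step requiring real insight is the initial algebraic split that turns $f_{\sigma,\nu}$ into $1$ plus a manifestly positive sinc/hyperbolic-sinc quotient; after that the conclusion is essentially forced.
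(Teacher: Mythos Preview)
Your proof is correct and considerably cleaner than the paper's own argument. The paper proceeds by writing
\[
f_{\sigma,\nu}(s)=\frac{\sigma^2+\nu^2}{\sigma^2}+\frac{F(s)}{\sigma^2(1-\rme^{s\sigma})^2},\qquad F(s)=2\sigma^2\rme^{s\sigma}(1-\cos s\nu)-\nu^2(1-\rme^{s\sigma})^2,
\]
and then spends the bulk of the proof establishing $F(s)<0$ for $s\neq 0$ via a monotonicity analysis: one differentiates, introduces a further auxiliary function $G(s)=\sigma^2\cos s\nu-\sigma\nu\sin s\nu+\nu^2\rme^{s\sigma}$, shows $G(s)<G(0)$ on $(-\infty,0)$ by locating critical points and comparing values on intervals $[-2k\pi,0]$, and finally obtains the limit by L'H\^opital using $F'(0)=F''(0)=0$. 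Your route bypasses all of this: the half-angle rewrite $f_{\sigma,\nu}(s)=1+\sin^2(s\nu/2)/\sinh^2(s\sigma/2)$ makes both the limit and the strict bound fall out immediately from $|\sin x|<|x|<\sinh|x|$ on $\R^*$. The paper's $F$ is, after your substitutions, just $4\rme^{s\sigma}\bigl(\sigma^2\sin^2(s\nu/2)-\nu^2\sinh^2(s\sigma/2)\bigr)$, so the two arguments are aiming at the same inequality, but yours identifies it directly while the paper arrives there through a longer calculus detour. Your observation about the $\rme^{2\sigma}$ typo is also correct.
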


\begin{proof}
    The fact that,
    $$f_{-\sigma, \nu}(s)=f_{\sigma, \nu}(-s)\hspace{1cm}\mbox{ and }\hspace{1cm}f_{\sigma, -\nu}(s)=f_{\sigma, \nu}(s),$$
    allows us to assume w.l.o.g. that $\sigma, \nu\in\R^+$.
    Also, by simple calculations, one see that 
    $$f_{\sigma, \nu}(s)=\frac{\sigma^2+\nu^2}{\sigma^2}+\frac{F(s)}{\sigma^2(1-\rme^{s\sigma})^2},$$
    where 
    $$F(s):=2\sigma^2\rme^{s\sigma}(1-\cos s\nu)-\nu^2(1-\rme^{s\sigma})^2.$$
   Now, derivation of $F$ gives us that 
 $$F'(s)=2\sigma^3\rme^{s\sigma}(1-\cos s\nu)+2\sigma^2\nu\rme^{s\sigma}\sin s\nu+2\sigma\nu^2\rme^{s\sigma}(1-\rme^{s\sigma})$$
 $$=2\sigma\rme^{s\sigma}\left[\sigma^2(1-\cos s\nu)+\sigma\nu+\sin s\nu^2(1-\rme^{s\sigma})\right]=2\sigma\rme^{s\sigma}\left[\left(\sigma^2+\nu^2\right)-\left(\sigma^2\cos s\nu-\sigma\nu\sin s\nu+\nu^2\rme^{s\nu}\right)\right]$$
 $$=2\sigma\rme^{s\sigma}\Bigl[G(0)-G(s)\Bigr],$$
 where for simplicity we define 
 $$G(s)=\sigma^2\cos s\nu-\sigma\nu\sin s\nu+\nu^2e^{s\sigma}.$$
 
Derivation of $G$, gives us that 
$$G'(s)=-\sigma\nu\left(\nu\cos s\nu+\sigma\sin s\nu-\nu\rme^{s\sigma}\right),$$
and hence:
\begin{itemize}
    \item[1.] If $s\in(0, +\infty)$, the relations, 
    $$\sin s\nu< s\nu\hspace{1cm}\mbox{ and }\hspace{1cm} 1+s\nu-\rme^{s\nu}<0,$$ 
    are valid. Hence, 
    $$G'(s)=-\sigma\nu\left(\nu\cos s\nu+\sigma\sin s\nu-\nu\rme^{s\sigma}\right)>-\sigma\nu^2\left(\cos s\nu+(s\sigma-\rme^{s\sigma})\right)>-\sigma\nu^2\left(\cos s\nu-1\right)\geq 0,$$
    showing that $G$ is strictly increasing on the interval $(0, +\infty)$, and hence  
    $$\forall s>0, \;\;\;\;\;\;G(s)>G(0).$$

    \item[2.] Now, if $s\in (-\infty, 0)$ is a critical point of $G$.   
    \begin{equation}
    \label{derivada}
    G'(s)=0\;\;\;\;\iff\;\;\;\; \nu\cos s\nu+\sigma\sin s\nu=\nu\rme^{s\sigma}.
    \end{equation}
    Therefore, at a critical point, 
$$G(s)=\sigma^2\cos s\nu-\sigma\nu\sin s\nu+\nu^2e^{s\sigma}=\sigma^2\cos s\nu-\sigma\nu\sin s\nu+\nu\left(\nu\cos s\nu+\sigma\sin s\nu\right)$$
$$=(\sigma^2+\nu^2)\cos s\nu=G(0)\cos s\nu.$$
By continuity, for any $k\in \N$, there exists $s_k\in [-2k\pi, 0]$ such that 
$$\forall s\in [-2k\pi, 0], \;\;\;\;\; G(s)\leq G(s_k).$$
Since, $s_k\in (-2k\pi, 0)$ implies that $s_k$ is a critical point, the previous calculations imply that $$G(s_k)=G(0)\cos s_k\nu\leq G(0)\;\;\;\mbox{ when }\;\;\; s_k\in (-2k\pi, 0).$$
On the other hand, 
$$G(-2k\pi)=(\sigma^2+\nu^2\rme^{-2k\pi})<\sigma^2+\nu^2=G(0),$$
and hence $G(s)\leq G(0)$ for all $s\in [-2k\pi, 0]$. By considering $k\rightarrow+\infty$ allows us to conclude that 
$$\forall s<0, \;\;\;\;\;\;G(s)\leq G(0).$$

Now, if for some $s_0<0$ the equality $G(s_0)=G(0)$ holds, then $s_0$ is a local maximum and hence a critical point.  Therefore,  
$$G(0)=G(s_0)=G(0)\cos s_0\nu\;\;\;\implies\;\;\;s_0\nu= -2k\pi\;\;\mbox{ for some }\;\;k\in \N.$$
However, by relation \ref{derivada} it holds that
$$0=G'(s_0)=G'\left(\frac{-2k\pi}{\nu}\right)\;\;\;\iff\;\;\; \nu=\nu\rme^{-2k\pi\frac{\sigma}{\nu}}<\nu,$$
which is impossible. As a matter of fact, 
$$\forall s<0, \;\;\;\;\;\;G(s)<G(0).$$

 \end{itemize}
 
 By the previous items, we conclude that 
 $$F'(s)>0\;\;\;\mbox{ for }\;\;\;s<0\hspace{1cm}\mbox{ and }\hspace{1cm}F'(s)<0\;\;\;\mbox{ for }\;\;\;s>0$$
 and hence, $F$ is strictly increasing on $(-\infty,0)$ and strictly  decreasing on $(0, +\infty)$ and hence 
 $$\forall s\neq 0, \;\;\;\; F(s)<F(0)=0\;\;\;\;\implies\;\;\;\; f_{\sigma, \nu}(s)<\frac{\sigma^2+\nu^2}{\sigma^2}.$$
 The expression forthe limit of $f_{\sigma, \nu}$ as $s$ goes to zero, is obtained by using L'Hospital and the fact that $F'(0)=F''(0)=0$, which concludes the proof.
 \end{proof}

Assume that $\tr A=2\lambda\neq 0$ and consider the closed ball 
\begin{equation}
    \label{bola}
    B=\left\{v\in\R^2;\; |v+\eta^*|^2\leq \frac{\lambda^2+\mu^2}{\lambda^2}|\eta|^2\right\}.
\end{equation}

\begin{figure}[h!]
	\centering
	\includegraphics[scale=.5]{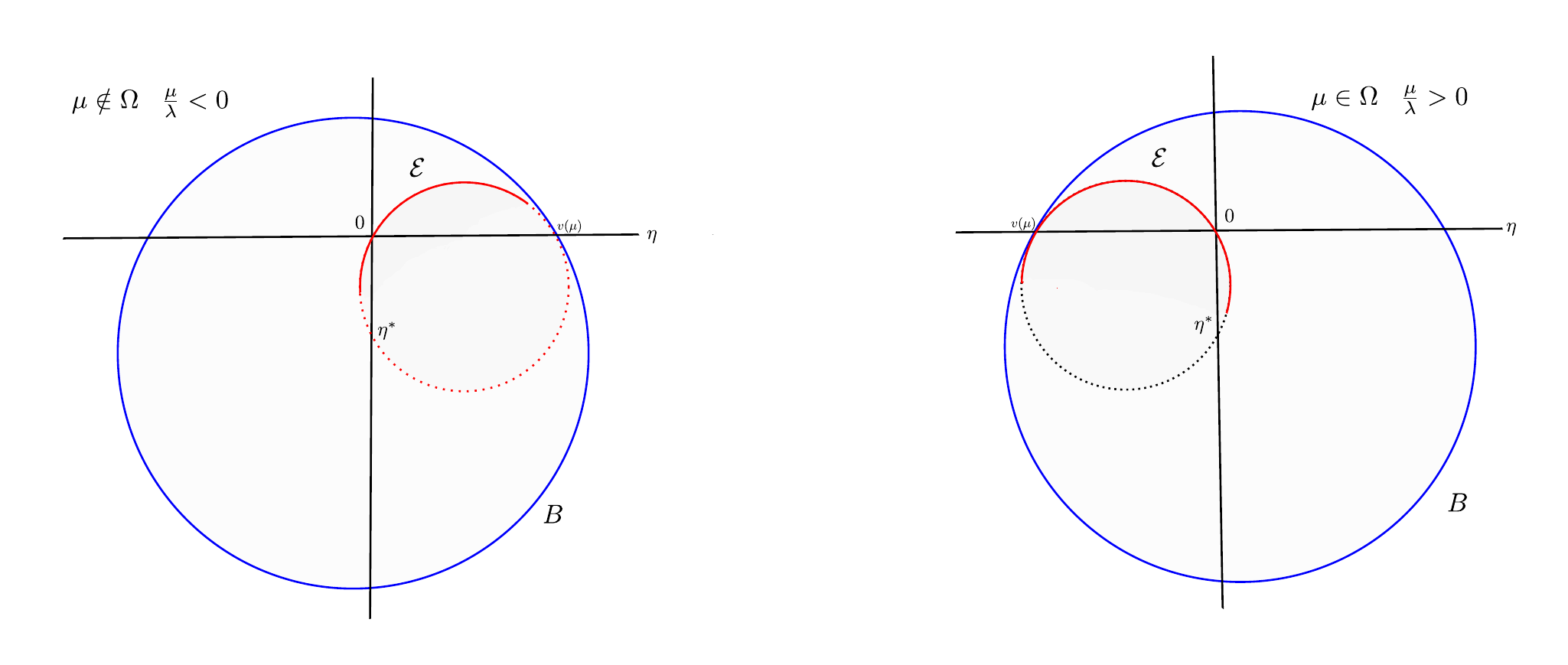}
	\caption{Equilibria of $\Sigma_{\R^2}$ and the set $B$.}
	\label{figura2}
\end{figure}

\begin{proposition}
\label{invariance}
    For all $u\in\Omega$, it holds that:
    \begin{itemize}
        \item[(i)] If $w\in B$ then $\varphi(s, w, u)\in \inner B$ for all $s\in\R$ with $\lambda s<0$;
        
        \item[(ii)] If $w\in \overline{\R^2\setminus B}$ and $w\neq v(u)$ then, for all $s\in\R$ with $\lambda s>0$,
        $$|\varphi(s, w, u)+\eta^*|> |w+\eta^*|.$$

    \end{itemize}
\end{proposition}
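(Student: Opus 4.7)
The plan is to express the flow in a compact complex form, extract a tight estimate from Lemma~\ref{technical}, and then close both items by ordinary triangle and reverse-triangle inequalities. First, identify $\R^2\cong\C$ so that $\theta$ acts as multiplication by $i$; the hypothesis $A\theta=\theta A$ then means $A(u)=A-u\theta$ is multiplication by the complex scalar $\alpha:=\lambda+i(\mu-u)$, and $e^{s\lambda}R_{s(\mu-u)}$ is multiplication by $e^{s\alpha}$. Setting $c:=-\eta^*$, $\tilde w:=w-c$, and $\tilde v:=v(u)-c$, the explicit solution formula rewrites as the clean identity
\[
\varphi(s,w,u)-c \;=\; e^{s\alpha}\tilde w+(1-e^{s\alpha})\tilde v.
\]
A direct calculation of $v(u)-c$ in $\C$ gives $\tilde v=(i\lambda-\mu)\eta/\alpha$, so $|\tilde v|^2=(\lambda^2+\mu^2)|\eta|^2/|\alpha|^2$; combined with $r^2=(\lambda^2+\mu^2)|\eta|^2/\lambda^2$ this yields the crucial proportionality $|\alpha|\cdot|\tilde v|=|\lambda|\cdot r$.

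The next step is to apply Lemma~\ref{technical} with $\sigma=\lambda$ and $\nu=\mu-u$, assuming first that $u\neq\mu$. Since $|1-e^{s\alpha}|^2=1-2e^{s\lambda}\cos(s(\mu-u))+e^{2s\lambda}$ is exactly the numerator of $f_{\sigma,\nu}(s)$ and $(\sigma^2+\nu^2)/\sigma^2=|\alpha|^2/\lambda^2$, the lemma yields the sharp estimate
\[
|1-e^{s\alpha}|\,|\tilde v| \;<\; |1-e^{s\lambda}|\,r, \qquad s\neq 0.
\]

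For (i), with $\lambda s<0$ (so $0<e^{s\lambda}<1$) and $|\tilde w|\leq r$, the triangle inequality followed by the above estimate gives
\[
|\varphi(s,w,u)-c| \;\leq\; e^{s\lambda}|\tilde w|+|1-e^{s\alpha}||\tilde v| \;<\; e^{s\lambda}|\tilde w|+(1-e^{s\lambda})r \;\leq\; r,
\]
so $\varphi(s,w,u)\in\inner B$. For (ii), with $\lambda s>0$ (so $e^{s\lambda}>1$) and $|\tilde w|\geq r$, the reverse triangle inequality together with the estimate yields
\[
|\varphi(s,w,u)-c| \;\geq\; e^{s\lambda}|\tilde w|-|1-e^{s\alpha}||\tilde v| \;>\; e^{s\lambda}|\tilde w|-(e^{s\lambda}-1)r \;=\; |\tilde w|+(e^{s\lambda}-1)(|\tilde w|-r) \;\geq\; |\tilde w|.
\]

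The main obstacle I foresee is the degenerate case $u=\mu$, where $\alpha=\lambda$ is real, Lemma~\ref{technical} degenerates to an equality, and the estimate is lost. Here I would argue directly: the flow collapses to $\varphi(s,w,\mu)=e^{s\lambda}(w-v(\mu))+v(\mu)$ with $|v(\mu)-c|=r$, and an explicit expansion of $|\varphi(s,w,\mu)-c|^2-|w-c|^2$ into a product of real factors recovers both claims. In particular for (ii) the hypothesis $w\neq v(\mu)$ is used precisely at the point where equality in the reverse triangle inequality would force $\tilde w$ to be a positive multiple of $\tilde v$; writing $\tilde w=\kappa\tilde v$ with $\kappa>1$ and evaluating explicitly produces the strict gain $(\kappa-1)(e^{s\lambda}-1)r>0$ that salvages the strict inequality.
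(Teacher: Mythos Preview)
Your argument is essentially the paper's proof in complex notation: both shift by $-\eta^*$, invoke Lemma~\ref{technical} with $\sigma=\lambda$, $\nu=\mu-u$ to obtain the key estimate $\bigl|(1-\rme^{sA(u)})(v(u)+\eta^*)\bigr|<|1-\rme^{s\lambda}|\,r$, and finish with the triangle and reverse-triangle inequalities exactly as you do. The identification $\R^2\cong\C$ is a tidy repackaging but not a different idea.

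You are in fact more careful than the paper about the borderline case $u=\mu$: the paper simply applies Lemma~\ref{technical}, whose hypotheses require $\nu\in\R^*$, without remarking that this excludes $u=\mu$. One small warning about your sketch there: when $u=\mu$ one has $|v(\mu)+\eta^*|=r$, so $v(\mu)\in\partial B$ and the fixed trajectory $\varphi(s,v(\mu),\mu)\equiv v(\mu)$ never enters $\inner B$. Thus item~(i) is literally false at the single point $w=v(\mu)$ when $u=\mu$, and no ``explicit expansion'' will recover it; this is a defect of the statement rather than of your method, and your treatment of~(ii) in the degenerate case (where the hypothesis $w\neq v(u)$ is genuinely needed) is correct.
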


\begin{proof}
    For any $w\in\R^2$ and $u\in\Omega$ we have that 
    $$\left|(1-\rme^{sA(u)})w\right|^2=|w|^2-2\langle\rme^{sA(u)}w, w\rangle+\left|\rme^{sA(u)}w\right|^2$$
    $$=|w|^2-2\rme^{s\lambda}\cos(s(\mu-u))|w|^2+\rme^{2s\lambda}|w|^2=\left(1-2\rme^{s\lambda}\cos(s(\mu-u))+\rme^{2s\lambda}\right)|w|^2.$$
    On the other hand.
    $$|v(u)+\eta^*|^2=\left|\frac{-u\lambda}{\lambda^2+(\mu-u)^2}\eta+\left(\frac{u(\mu-u)}{\lambda^2+(\mu-u)^2}+1\right)\eta^*\right|^2$$
    $$=\left\{\left(\frac{-u\lambda}{\lambda^2+(\mu-u)^2}\right)^2+\left(\frac{u(\mu-u)}{\lambda^2+(\mu-u)^2}+1\right)^2\right\}|\eta|^2=\frac{\lambda^2+\mu^2}{\lambda^2+(\mu-u)^2}|\eta|^2.$$
    Therefore, by Lemma \ref{technical} we conclude that
    $$\left|(1-\rme^{sA(u)})(v(u)+\eta^*)\right|< \sqrt{\frac{\lambda^2+(\mu-u)^2}{\lambda^2}}|1-\rme^{s\lambda}|\sqrt{\frac{\lambda^2+\mu^2}{\lambda^2+(\mu-u)^2}}|\eta|=\sqrt{\frac{\lambda^2+\mu^2}{\lambda^2}}|1-\rme^{s\lambda}||\eta|$$
    
    (i) Let us consider $w\in B$, $u\in\Omega$ and $s\in\R$ with $\lambda s<0$. Then, 
    $$\left|\varphi(s, w, u)+\eta^*\right|=\left|\rme^{sA(u)}(w-v(u))+v(u)+\eta^*\right|=\left|\rme^{sA(u)}(w+\eta^*)+(1-\rme^{sA(u)})(v(u)+\eta^*)\right|$$
    $$\stackrel{\lambda s<0}{<} \rme^{s\lambda}\left|w+\eta^*\right|+\sqrt{\frac{\lambda^2+\mu^2}{\lambda^2}}(1-\rme^{s\lambda})|\eta|\stackrel{w\in B}{\leq }\rme^{s\lambda}\sqrt{\frac{\lambda^2+\mu^2}{\lambda^2}}+\sqrt{\frac{\lambda^2+\mu^2}{\lambda^2}}(1-\rme^{s\lambda})|\eta|=\sqrt{\frac{\lambda^2+\mu^2}{\lambda^2}}|\eta|,$$
    showing that $\varphi(s, w, u)\in \inner B$.
    \bigskip
    
    (ii) Analogously, if $w\in \overline{\mathbb{R}^2\setminus B}$ satisfy $w\neq v(u)$ and $s\in\R$ satisfy $\lambda s<0$, then, 
    $$\left|\varphi(s, w, u)+\eta^*\right|=\left|\rme^{sA(u)}(w-v(u))+v(u)+\eta^*\right|=\left|\rme^{sA(u)}(w+\eta^*)+(1-\rme^{sA(u)})(v(u)+\eta^*)\right|$$
    $$\stackrel{\lambda s>0}{>} \rme^{s\lambda}\left|w+\eta^*\right|-(\rme^{s\lambda}-1)\sqrt{\frac{\lambda^2+\mu^2}{\lambda^2}}|\eta|=\rme^{s\lambda}\underbrace{\left(\left|w+\eta^*\right|-\sqrt{\frac{\lambda^2+\mu^2}{\lambda^2}}|\eta|\right)}_{\geq 0, \;w\in \overline{\R^2\setminus B}}+\sqrt{\frac{\lambda^2+\mu^2}{\lambda^2}}|\eta|$$
    $$\stackrel{\lambda s>0}{\geq}\left|w+\eta^*\right|-\sqrt{\frac{\lambda^2+\mu^2}{\lambda^2}}|\eta|+\sqrt{\frac{\lambda^2+\mu^2}{\lambda^2}}|\eta|=\left|w+\eta^*\right|,$$
    concluding the proof.
    
\end{proof}

\subsection{Control sets of $\Sigma_{\R^2}$}

In this section we characterize the possibles control sets for the control-affine system we we introduce in Section 3. We start with a result concerning the positive and negative orbits for the equilibria of the system.

\begin{proposition}
	\label{open}
	For any $u\in \inner\Omega$ with $u\neq\mu$ it holds that 
	$$\OC^+(v(u))\;\;\;\mbox{ and }\;\;\;\OC^-(v(u)),$$ 
	are open sets.
\end{proposition}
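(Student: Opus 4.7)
The proof will rest on verifying the Lie Algebra Rank Condition (LARC) at the equilibrium $v(u)$ and then invoking a standard openness result for orbits emanating from an interior equilibrium. Write the system as $\dot v = f_0(v) + u' f_1(v)$ with drift $f_0(v) = Av$ and control vector field $f_1(v) = -\theta v + \eta$. The commutation $A\theta = \theta A$ makes the Lie bracket $[f_0, f_1]$ degenerate to the constant vector field $-A\eta$, and at $v(u)$ the equilibrium relation $f_0(v(u)) + u f_1(v(u)) = 0$ forces $f_0(v(u))$ and $f_1(v(u))$ to be collinear. Hence LARC at $v(u)$ reduces to the linear independence of the two explicit vectors $f_1(v(u))$ and $-A\eta$.

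Substituting the explicit formula (\ref{equilibria}) and writing both vectors in the basis $\{\eta,\eta^*\}$, a short computation yields
$$f_1(v(u)) = \frac{1}{\lambda^2+(\mu-u)^2}\bigl[(\lambda^2+\mu(\mu-u))\eta + u\lambda\,\eta^*\bigr],$$
and comparing with $-A\eta = -\lambda\eta - \mu\eta^*$ gives that the determinant of the pair in this basis equals $(\lambda^2+\mu^2)(u-\mu)/(\lambda^2+(\mu-u)^2)$. This is nonzero precisely when $u \neq \mu$, so LARC holds at $v(u)$ under the hypothesis. With LARC at $v(u)$ and $v(u)$ an equilibrium for a control $u \in \inner\Omega$, the openness of both $\OC^+(v(u))$ and $\OC^-(v(u))$ follows from a standard accessibility argument (cf.\ \cite{CK}): since $v(u)$ is fixed by the flow of the constant control $u$, one may wait at $v(u)$ as long as one wishes before or during any further maneuver, and combining this waiting with bracket-generating needle variations makes the endpoint map a local submersion onto $\R^2$ at any reachable point. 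The time-reversed argument handles $\OC^-(v(u))$ symmetrically.

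The main technical step is the LARC verification at the specific point $v(u)$. Because $[f_0,f_1]$ collapses to the constant $-A\eta$, higher brackets produce nothing new, and the entire rank condition hinges on whether this single constant vector is transversal to $f_1(v(u))$; the explicit bookkeeping in the $\{\eta,\eta^*\}$ basis is exactly what pins down the algebraic condition $u \neq \mu$, explaining why this is precisely the hypothesis needed.
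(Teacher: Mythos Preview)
Your approach is genuinely different from the paper's and, once the justification is tightened, it works. The paper does not invoke any Lie-algebraic criterion: it builds an explicit two-parameter endpoint map
\[
f(u_1,u_2)=\varphi\bigl(s,\varphi(s,v(u),u_1),u_2\bigr),
\]
computes its Jacobian at $(u,u)$ by hand, and chooses $s>0$ so that $R_{s(\mu-u)}=\theta$ to force the two partial derivatives to be linearly independent. Your route replaces this concrete construction by a rank computation followed by a general local-controllability principle; this is cleaner provided the principle is the right one.

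Two points need correction. First, the claim that ``higher brackets produce nothing new'' is false: since $[f_0,f_1]=-A\eta$ is constant, one gets $[f_1,[f_0,f_1]]=-\theta A\eta$, which together with $-A\eta$ already spans $\R^2$. Consequently the full LARC holds at \emph{every} point, including $v(\mu)$. Second, and more seriously, ``LARC at an interior equilibrium'' is \emph{not} sufficient for openness of the orbit: the paper's own Proposition~\ref{singleton} shows that when $\tr A>0$ and $\mu\in\Omega$ the orbit $\OC^+(v(\mu))$ meets the ball $B$ only at $v(\mu)$, so $v(\mu)\notin\inner\OC^+(v(\mu))$ even though LARC holds there. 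Thus the general principle you cite, as stated, is wrong.

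What your computation \emph{actually} establishes is stronger than LARC and is exactly what is needed. A direct check gives $A(u)f_1(v(u))=A\eta$, so the pair $\{f_1(v(u)),[f_0,f_1](v(u))\}$ coincides (up to sign) with the Kalman pair $\{b,\,A(u)b\}$ for the linearization $\dot w=A(u)w+\tilde u\,b$ of $\Sigma_{\R^2}$ at the equilibrium $(v(u),u)$. Your determinant condition $u\neq\mu$ is therefore precisely controllability of this linearized system, and \emph{that} is the standard hypothesis which yields small-time local controllability at $v(u)$ and hence $v(u)\in\inner\OC^{\pm}(v(u))$. Rewriting your final paragraph to invoke controllability of the linearization (rather than LARC) fixes the gap; the determinant calculation itself is correct and does all the work.
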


\begin{proof}
	Since the proof for the positive and negative orbits are analogous, let us show only the positive case. Moreover, the fact that  $\inner\OC^+(v(u))$ is positively invariant implies that $\OC^+(v(u))$ is open if and only if $v(u)\in\inner\OC^+(v(u))$.
	
	In order to show the former, let us consider $s>0$ and define the map
	$$f:(\inner \Omega)^2\rightarrow\R^2, \;\;f(u_1, u_2):=\rme^{sA(u_2)}\left(\rme^{sA(u_1)}\left(v(u)-v(u_1)\right)+v(u_1)-v(u_2)\right)+v(u_2).$$
	We observe that,
	$$f(u, u)=v(u)\;\;\mbox{ and }\;\;f(u_1, u_2)=\varphi(s, \varphi(s, v(u), u_1), u_2)),\;\;\;\mbox{ implying that }\;\;\; f(u^2)\subset\mathcal{O}^+(v(u)).$$
	On the other hand, a simple calculation shows
	$$\frac{\partial f}{\partial u_1}(u_1, u_2)=\rme^{sA(u_2)}\left(-s\theta(v(u)-v(u_1))+(1-\rme^{sA(u_1)})v'(u_1)\right),$$
	$$\frac{\partial f}{\partial u_2}(u_1, u_2)=-s\theta\rme^{sA(u_2)}\left(\rme^{sA(u_1)}(v(u)-v(u_1))+v(u_1)-v(u_2)\right)+(1-\rme^{sA(u_2)})v'(u_2),$$
	and hence,
	$$\frac{\partial f}{\partial u_1}(u, u)=\rme^{sA(u)}(1-\rme^{sA(u)})v'(u)\hspace{1cm}\mbox{ and }\hspace{1cm}\frac{\partial f}{\partial u_2}(u, u)=(1-\rme^{sA(u)})v'(u).$$
	Also,
	$$A(u)v(u)=-u\eta\;\;\implies\;\;\;-\theta v(u)+A(u)v'(u)=-\eta\;\;\implies\;\; v'(u)=-A(u)^{-1}(\eta-\theta v(u)),$$
	which implies, 
	$$v'(u)=0\;\;\iff\;\; \eta=\theta v(u)=-uA(u)^{-1}\theta\eta\;\;\iff\;\;-u\theta\eta=A(u)\eta=A\eta-u\theta\eta\;\;\stackrel{\det A\neq 0}{\iff}\;\;\eta =0.$$
	Therefore, $v'(u)\neq 0$ for all $u\in\Omega$. Since $u\neq\mu$, by considering $s>0$ with $R_{s(\mu-u)}=\theta$, allows us to obtain that  
	$$\left\langle \frac{\partial f}{\partial u_1}(u, u), \theta \frac{\partial f}{\partial u_2}(u, u)\right\rangle=\left\langle \rme^{sA(u)}(1-\rme^{sA(u)})v'(u), \theta(1-\rme^{sA(u)})v'(u)\right\rangle$$
	$$=\left\langle \rme^{s\lambda}\theta (1-\rme^{s\lambda}\theta) v'(u), \theta(1-\rme^{s\lambda}\theta )v'(u)\right\rangle=\left\langle \rme^{s\lambda}\theta  v'(u) -\rme^{2s\lambda}\theta^2 v'(u), \theta v'(u)-\rme^{s\lambda}\theta^2v'(u)\right\rangle$$
	$$=\rme^{s\lambda} |v'(u)|^2 + 2\rme^{2s\lambda}\underbrace{\langle \theta v'(u), v'(u)\rangle}_{=0} +   \rme^{3s\lambda}|v'(u)|^2=\rme^{s\lambda}(1+\rme^{2s\lambda})|v'(u)|^2\neq 0,$$
	which implies that the set
	$$\left\{\frac{\partial f}{\partial u_1}(u, u), \frac{\partial f}{\partial u_2}(u, u)\right\}\;\;\;\mbox{ is linearly independent}.$$
	As a consequence, for any $u\in\inner \Omega$ with $u\neq\mu$, there exists an open neighborhood $(u, u)\in U\subset (\inner\Omega)^2$ such that $f(U)$ is open. Since $v(u)\in f(U)\subset\OC^+(v(u))$ we get $v(u) \in \inner\OC^+(v(u))$ which implies that $\OC^+(v(u))$ is open for any $u\in\inner \Omega$ with $u\neq\mu$.
	
\end{proof}

\bigskip

We can now prove the main result of this section.

\begin{theorem}
\label{controlaffine}
	The system $\Sigma_{\R^2}$ admits a unique bounded control set $\CC_{\R^2}$ with nonempty interior satisfying:
	\begin{enumerate}
		\item $\tr A=0$ and $\CC_{\R^2}=\R^2$;
		\item $\tr A<0$ and $\CC_{\R^2}=\overline{\OC^+(v(u))}$ for all $u\in\Omega$;
		\item $\tr A>0$ and $\CC_{\R^2}=\OC^-(v(u))$ for all $u\in\Omega$ with $u\neq\mu$.
	\end{enumerate}
\end{theorem}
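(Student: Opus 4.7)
My plan is to handle the three cases separately, after setting up two common facts. First, the Lie algebra rank condition (LARC) holds for $\Sigma_{\R^2}$: since the drift $X(v)=Av$ and control field $Y(v)=-\theta v+\eta$ satisfy $[X,Y]=A\eta\neq 0$, together with $Y$ they span $\R^2$ except on a single affine line, which is handled by one higher bracket using $A\theta=\theta A$. Second, every equilibrium $v(u)$ satisfies both conditions of Definition \ref{control} when one uses the constant control $u$, so each $v(u)$ lies in some control set; the core task is to show that all of them lie in the same one, which is $\CC_{\R^2}$.

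\textbf{Case 2} $(\lambda<0)$ is the central one. By Proposition \ref{invariance}(i), $B$ is positively invariant, and the characterization of $\EC$ places every equilibrium inside $B$ (the equilibria circumference is internally tangent to $\partial B$ at $-\eta^*$). I take $\CC_{\R^2}:=\overline{\OC^+(v(u))}$ for an arbitrary $u\in\Omega$ and verify the two conditions of a control set. Condition 1 is forward invariance of $\overline{\OC^+(v(u))}$, immediate from the construction of $\OC^+$. For condition 2, I use that the constant-$u$ trajectory $\varphi(t,x,u)$ converges to $v(u)$ as $t\to+\infty$ for every $x$ (contractive linear dynamics with $\lambda<0$), so $v(u)\in\overline{\OC^+(x)}$ for every $x\in\overline{\OC^+(v(u))}$, and forward invariance then gives $\overline{\OC^+(v(u))}\subset\overline{\OC^+(x)}$. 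The same attraction argument proves $v(u')\in\overline{\OC^+(v(u))}$ for every $u'\in\Omega$, so the definition does not depend on $u$. Nonempty interior follows from Proposition \ref{open} (for $u\in\inner\Omega$ with $u\neq\mu$), boundedness from $\CC_{\R^2}\subset B$, and uniqueness from the fact that any control set with nonempty interior must contain some equilibrium (via LARC-based accessibility applied to an interior point together with forward attraction toward $B$).

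\textbf{Case 3} $(\lambda>0)$ is dual to Case 2 via time reversal: $B$ is backward invariant (Proposition \ref{invariance}(i) with $s<0$), and under constant control $\varphi(-t,x,u)\to v(u)$. I set $\CC_{\R^2}:=\OC^-(v(u))$ for $u\in\inner\Omega$ with $u\neq\mu$, which is open by Proposition \ref{open}. For condition 1, any $x\in\OC^-(v(u))$ can be steered to $v(u)$ in finite time and then kept there, producing a forward trajectory in $\OC^-(v(u))$. For condition 2, given $y\in\OC^-(v(u))$, I first steer from $x$ to $v(u)$ and then must produce an approximate forward trajectory from $v(u)$ to $y$; this is extracted from the openness of both $\OC^+(v(u))$ and $\OC^-(v(u))$ about $v(u)$ combined with a LARC-based perturbation along the known backward trajectory connecting $v(u)$ to $y$. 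Uniqueness and the inclusion $\CC_{\R^2}\subset B$ proceed as in Case 2.

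\textbf{Case 1} $(\lambda=0,\mu\neq 0)$ admits no invariant ball; the dynamics is area-preserving, and constant-control trajectories are circles around $v(u)\in\R\cdot\eta^*$ (or a straight line when $u=\mu$). I show $\OC^+(v_0)=\R^2$ for every $v_0$ by concatenating arcs from circles with distinct centers $v(u_1),v(u_2)$: the switch times can be chosen so that the composite motion shifts the radial distance to any prescribed value, and LARC accessibility (via Proposition \ref{open}) upgrades approximate reachability to exact reachability. The \emph{main obstacle} will be condition 2 in Case 3 together with the controllability in Case 1: in Case 3 the natural attraction to $v(u)$ operates only in backward time, so approximate forward reachability of an arbitrary $y\in\OC^-(v(u))$ from $v(u)$ must be extracted from openness and LARC rather than from a direct attraction argument; in Case 1 the absence of contraction forces an explicit geometric construction to establish global forward reachability from an arbitrary starting point.
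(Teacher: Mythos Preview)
Your strategy coincides with the paper's: use Proposition~\ref{open} for openness of orbits at equilibria, Proposition~\ref{invariance} for the role of the ball $B$, forward/backward attraction to $v(u)$ to verify conditions 1 and 2 of Definition~\ref{control}, and in the trace-zero case concatenate circular arcs centred at distinct $v(u)$ to build a periodic trajectory through $0$ and any $v_0$. In particular, your Case~3 argument for condition~2 (backward convergence of $\varphi(-\tau,y,u)$ to $v(u)$ lands inside the open set $\OC^+(v(u))$, hence $y\in\OC^+(v(u))$) is exactly the paper's mechanism, and your Case~1 outline matches the paper's explicit half-circle construction.

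Two places deserve tightening. First, you do not address maximality; the paper checks it separately (item~(c)) by showing that any $v$ with $\OC^-(v(u))\cup\{v\}$ satisfying conditions~1 and~2 must already lie in $\OC^-(v(u))$, again via openness of $\OC^-(v(u))$. Second, your uniqueness argument (``any control set with nonempty interior must contain some equilibrium'') is not justified as stated and is not the paper's route. The paper instead proves, for $\lambda>0$, that the distance $|v-\CC_{\R^2}|$ is nondecreasing along forward trajectories (using the explicit form $\varphi(s,v,u)=\rme^{s\lambda}R_{s(\mu-u)}(v-v(u))+v(u)$ and negative invariance of $\overline{\CC_{\R^2}}$), so no control set can exist in $\R^2\setminus\overline{\CC_{\R^2}}$; the $\lambda<0$ case is the time-reversed statement. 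You should replace your equilibrium-containment claim with this distance argument, or else supply a genuine proof that a control set with nonempty interior meets $\EC$.
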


\begin{proof} 	1. If $\tr A=0$, the solutions of $\Sigma_{\R^2}$ for $u\in\Omega$ with $u\neq\mu$ are given by 
	$$\varphi(s, v, u)=R_{s(\mu-u)}(v-v(u))+v(u), \;\;\;\mbox{ where }\;\;\; v(u)=-A(u)^{-1}\eta=\frac{u}{|\mu-u|}\eta^*.$$
	As a consequence,
	$$|\varphi(s, v, u)-v(u)|=|R_{s(\mu-u)}(v-v(u))|=|v-v(u)|,$$
	proving that the image of the map $s\mapsto\varphi(s, v, u)$ lies in the circumference with radius $|v-v(u)|$ and center $v(u)$ which we shall denote by $C_{u, v}$.
	Let us consider $v_0\in\R^2$ arbitrary. In order to show that $\Sigma_{\R^2}$
	is controllable, it is enough to show the existence of a periodic orbit passing through $v_0$ and $0$. Such orbit is constructed as follows:
	
	\begin{itemize}
		\item[(a)] Let $\rho>0$ and consider $\Omega_{\rho}:=[-\rho, \rho]$ such that 
		$$\mu\notin\Omega_{\rho}\;\;\;\mbox{ and }\;\;\;\Omega_{\rho}\subset\Omega,$$
		which is possible since $\mu\neq 0$.
		
		\item[(b)] The circumference $C_{\rho, v_0}$ intersects the line $\R\cdot\eta^*$ in two points. Denote by $v_1$ the point in this intersection that is close to $v(-\rho)$. In particular, $v_1=\varphi(s_1, v, \rho)$ for some $s_1>0$;
		
		\item[(c)] If $v_1\notin v(\Omega_{\rho})$, we repeat the process in the previous item for the circumference $C_{-\rho, v_1}$, obtaining a point $v_2$. 
		
		\item[(d)] Repeating the previous process, if $v_n\notin v(\Omega_{\rho})$, we obtain, in the same way, a point $v_{n+1}$ belonging to the intersection of the circumference $C_{(-1)^n\rho, v_n}$ and the line $\R\cdot\eta^*$. By induction, it is straightforward to see that the radius $R_n$ of $C_{(-1)^n\rho, v_n}$ satisfies 
		$$R_n=|v_n-v((-1)^n\rho)|=|v_0-v_{\rho}|-n\cdot\mathrm{diam}\cdot v(\Omega_{\rho}).$$
		Therefore, there exists $N\in\N$ such that $v_N\in v(\Omega_{\rho})\cap\R\cdot\eta^*$.
		
		\item[(e)] Now, since $v_N\in v(\Omega_{\rho})$ there exists, by continuity, $u_N\in\Omega_{\rho}\subset\Omega$ satisfying $|v(u_N)|=|v_N-v(u_N)|.$ The circumference $C_{u_N ,v_N}$ passes through $v_N$ and by the origin $0$. Therefore, there exists $s_N>0$ such that $\varphi(s_N, v_N, u_N)=0$ and by concatenation we get the trajectory from $v_0$ to $0$ (Figure\ref{figura1} in blue).
		
		\item[(f)] Since the trajectory from $v_0$ to $0$ is made by choosing ``half" of the circumferences of $C_{(-1)^n\rho, v_n}$, $n=0, \ldots N-1$ and $C_{u_N, v_N}$, the inverse path obtained by the complementary half circumferences, gives us the trajectory from $0$ to $v_0$ (Figure\ref{figura1} in red).
		
	\end{itemize}
	
	By the previous items, we get a periodic trajectory passing through $v_0$ and $0$. By arbitrariness of $v_0\in\R^2$ we get that $\Sigma_{\R^2}$ is controllable.
	
\begin{figure}[h!]
	\centering
	\includegraphics[scale=1.3]{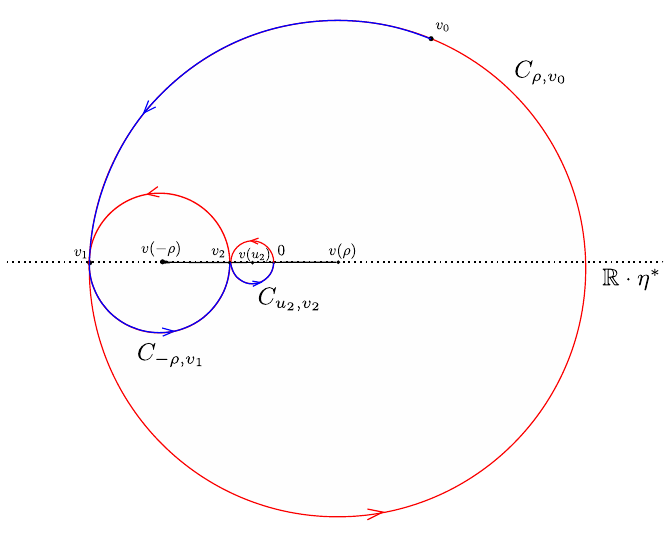}
	\caption{Periodic trajectory passing through $0$ and $v_0$}
	\label{figura1}
\end{figure}

	\bigskip

	2. Since $\tr A\neq 0$ it follows that $\det A(u)\neq 0$ for all $u\in\Omega$ implying that the solutions of $\Sigma_{\R^2}$ are given by concatenations of the curves
	$$\varphi(s, v, u)=\rme^{s\lambda}R_{s(\mu-u)}(v-v(u))+v(u).$$
	Let us analyze the case where $\lambda>0$ and $\mu>0$ since any other choices are treated similarly. For this case, let us show that $\mathcal{O}^-(v(u))$ is a control set for any $u\in\inner \Omega$ with $u\neq\mu$.
	
	By our choices, for any $u\in\Omega$, it holds that
	\begin{equation}
		\label{convergence}
		\forall v\in\R^2, \;\;\;\;\;\varphi(s, v, u)\rightarrow v(u)\;\;\mbox{ as }\;\;\;s\rightarrow-\infty.
	\end{equation}
	
	Moreover, it turns out that:
	
	\begin{itemize}
		\item[(a)] If $v\in\OC^-(v(u))$, there exists $\tau>0$ and ${\bf u}\in\UC$ such that $\varphi(\tau, v, {\bf u})=v(u)$. As a consequence, 
		$$\varphi(s, \varphi(\tau, v, {\bf u}), u)=\varphi(s, v(u), u)=v(u)\in\OC^-(v(u)),$$
		showing that $\OC^-(v(u))$ satisfies condition 1. in Definition \ref{control};
		
		\item[(b)] Let $v_1, v_2\in\OC^-(v(u))$.  By definition, there exists $\tau_1>0$ and ${\bf u}_1\in\UC$ such that $\varphi(\tau_1, v_1, {\bf u}_1)=v(u)$. On the other hand, equation (\ref{convergence}) and Proposition \ref{open} imply the existence of $\tau>0$ such that
		$$\varphi(-\tau, v_2, u)\in\OC^+(v(u))\;\;\;\implies\;\;\;\; v_2\in \OC^+(v(u)).$$
		As a consequence, there exists $\tau_2>0$ and ${\bf u}_2\in\UC$ such that $\varphi(\tau_2, v(u), {\bf u}_2)=v_2$ and by concatenation we conclude that  $v_2\in\OC^+(v_1)$. By the arbitrariness in the choices of $v_1, v_2\in\OC^-(v(u))$ we conclude that 
		$$\forall v\in\OC^-(v(u)), \;\;\;\;\; \OC^-(v(u))\subset \OC^+(v)),$$
		showing that $\OC^-(v(u))$ satisfies condition 2 in Definition \ref{control}.
		
		\item[(c)] Let $v\in\R^2$ and assume that $\OC^-(v(u))\cup\{v\}$ satisfies conditions 1. and 2. of Definition \ref{control}. By Proposition \ref{open},
		$$v(u)\in\overline{\OC^+(v)}\;\;\implies\;\;\OC^-(v(u))\cap\OC^+(v)\neq\emptyset$$
		$$\implies\;\;\exists \tau>0, {\bf u}
		\in\UC; \;\;\;\;\varphi(\tau, v, {\bf u})\in\OC^-(v(u))\;\;\;\implies\;\;\;v\in\OC^-(v(u)),$$
		proving that $\OC^-(v(u))$ is maximal, and hence, a control set of $\Sigma_{\R^2}$.
		
		\item[(d)] For any $u_1, u_2\in\Omega\setminus\{\mu\}$,  
	$$\varphi(-s, v(u_1), u_2)\rightarrow v(u_2)\;\;\;\mbox{ and }\;\;\; \varphi(-s, v(u_2), u_1)\rightarrow v(u_1), \;\;\;s\rightarrow +\infty.$$
	Thus, by Proposition \ref{open} we obtain
	$$v(u_2)\in\OC^-(v(u_1))\;\;\;\mbox{ and }\;\;\;v(u_1)\in\OC^-(v(u_2)),$$
	showing that $\CC_{\R^2}=\OC^-(v(u))$ for any $u\in\inner\Omega$ with $u\neq \mu$.
	\end{itemize}
	
	Assume now that $u^+\neq\mu$ with $\mu-u^+>0$ and let $u\in\inner\Omega$ with $\mu-u>0$. Note that the curve $s\in\R^+\mapsto\varphi(-s, v(u), u^+)$ revolves around the point $v(u)$ counter-clockwise with $\varphi(-s, v(u), u^+)\rightarrow v(u)$ as $s\rightarrow+\infty$, and the curve $\tau\in\R^+\mapsto\varphi(\tau, v(u), u^+)$ revolves around the point $v(u^+)$ clockwise with $\varphi(\tau, v(u), u^+)\rightarrow +\infty$ as $\tau\rightarrow+\infty$. Therefore, there exists $s,\tau\in\R^+$ such that 
	$$\varphi(-s, v(u), u^+)=\varphi(\tau, v(u^+), u)\;\;\;\implies\;\;\; v(u^+)\in\OC^-(v(u))=\CC_{\R^2},$$
	hence, by invariance we get $\CC_{\R^2}=\OC^-(v(u^+))$. Analogously, one argues that $\CC_{\R^2}=\OC^-(v(u^-))$ when $u^-\neq\mu$ 
	showing that $\CC_{\R^2}=\OC^-(v(u))$ for all $u\in\Omega$ with $u\neq\mu$.
	
	Next, we consider the ball $B$ given in (\ref{bola}). Since $v(u)\in B$ for all $u\in\Omega$, our assumption that $\tr A=2\lambda>0$ implies, by Proposition \ref{invariance}, that 
    	$$\OC^-(v(u))\subset B\;\;\;\implies\;\;\;\CC_{\R^2}\;\mbox{ is bounded.}$$
	
	To finish the proof, let us show the uniqueness of $\CC_{\R^2}$.  Assume that  
	$$|\varphi(s, v, u)-\CC_{\R^2}|\leq |v-\CC_{\R^2}|,$$
	 for some $s>0$, $u\in\Omega$ and $v\in \R^2$. By compactness, there exists $w\in \overline{\CC_{\R^2}}$ such that $|\varphi(s, v, u)-\CC_{\R^2}|=|\varphi(s, v, u)-w|$ and by invariance, 
	 $\varphi(-s, w, u)\in\overline{\CC_{\R^2}}$. Hence, 
	 $$|v-\CC_{\R^2}|=|\varphi(-s, \varphi(s, v, u), u)-\CC_{\R^2}|\leq |\varphi(-s, \varphi(s, v, u), u)-\varphi(-s, w, u)|$$
	 $$=\rme^{-s\lambda}|\varphi(s, v, u)-w|=\rme^{-s\lambda}|\varphi(s, v, u)-\CC_{\R^2}|\leq \rme^{-s\lambda}|v-\CC_{\R^2}|,$$
	 which gives us that 
	 $$(1-\rme^{-s\lambda})|v-\CC_{\R^2}|\leq 0\;\;\;\stackrel{\lambda>0}{\implies}\;\;\;|v-\CC_{\R^2}|=0\;\;\;\iff\;\;\;v\in\overline{\CC_{\R^2}}.$$
	 
	  Consequently, any point $v\in\R^2\setminus\overline{\CC_{\R^2}}$ satisfies, 
	  $$\OC^+(v)\subset \R^2\setminus N_{\epsilon}\left(\CC_{\R^2}\right),$$
	  where $\epsilon=|v-\CC_{\R^2}|>0$ and
	$$N_{\epsilon}\left(\CC_{\R^2}\right)=\left\{v\in\R^2;\;|v-\CC_{\R^2}|<\epsilon\right\},$$
	 is the $\epsilon$-neighborhood of $\CC_{\R^2}$, showing that one cannot reach the interior of $N_{\epsilon}\left(\CC_{\R^2}\right)$ from $\R^2\setminus N_{\epsilon}\left(\CC_{\R^2}\right)$. By condition 1. in Definition \ref{control} and the fact that, for any $v\neq v(u)$
 $$\varphi(s, v, u)\rightarrow+\infty, \;\;\;s\rightarrow+\infty,$$
	  we conclude that $\Sigma_{\R^2}$ does not admits control sets in $\R^2\setminus\overline{\CC_{\R^2}}$, implying that $\CC_{\R^2}$ is in fact the only control set with nonempty interior, which concludes the proof.
\end{proof}

\bigskip

    The proof of the uniqueness of $\CC_{\R^2}$ actually shows that, in the open case, control sets with empty interior could exists in $\partial\CC_{\R^2}$. The next result shows this is in fact the case if $\mu\in\Omega.$

	\begin{proposition}
	\label{singleton}
	If the control set $\CC_{\R^2}$ of $\Sigma_{\R^2}$ is a proper open subset of $\R^2$ and $\mu\in\Omega$, the singleton $\{v(\mu)\}$ is a control set of $\Sigma_{\R^2}$.
	\end{proposition}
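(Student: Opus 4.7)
Conditions~1 and~2 of Definition~\ref{control} are immediate for $\{v(\mu)\}$: since $\mu\in\Omega$, the constant control $u\equiv\mu$ yields $\varphi(s,v(\mu),\mu)=v(\mu)$ for every $s\geq 0$, and condition~2 is trivial for a singleton. The content of the proposition is therefore maximality: I must show that every proper superset $D\supsetneq\{v(\mu)\}$ satisfying those two conditions leads to a contradiction. Fix such a hypothetical $D$ and pick $w\in D\setminus\{v(\mu)\}$.

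The strategy is first to force $D\subset\partial B$ using condition~2, and then derive a contradiction with condition~1. Since the equilibria circumference has center $\zeta$ at distance $R$ from $-\eta^*$ and radius $R$, it is internally tangent to $\partial B$ precisely at $v(\mu)$; in particular $v(\mu)$ is the \emph{only} equilibrium of $\Sigma_{\R^2}$ on $\partial B$, and $v(u)\in\inner B$ for every $u\in\Omega\setminus\{\mu\}$. Starting at $v(\mu)\in\partial B$, the control $u\equiv\mu$ fixes the orbit, whereas any piecewise constant ${\bf u}\in\UC$ that ever takes a value $u_i\neq\mu$ triggers Proposition~\ref{invariance}(ii) at its current point (either $v(\mu)$, where $v(u_i)\neq v(\mu)$, or a point already outside $B$): the subsequent trajectory lies strictly in $\R^2\setminus B$, and iterating (ii) at each later switch preserves $|\varphi+\eta^*|>2R$, since the current point lies outside $B$ and therefore differs from $v(u_{i+1})\in B$. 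Hence $\OC^+(v(\mu))\subset\{v(\mu)\}\cup(\R^2\setminus B)$, and condition~2 at $v(\mu)$ gives
\[
w\in\overline{\OC^+(v(\mu))}\subset\{v(\mu)\}\cup(\R^2\setminus\inner B),
\]
so $|w+\eta^*|\geq 2R$. If the inequality were strict, iterated use of (ii) starting from $w$ would yield $\overline{\OC^+(w)}\subset\{v:|v+\eta^*|\geq|w+\eta^*|\}$, a set excluding $v(\mu)$ (since $|v(\mu)+\eta^*|=2R$) and therefore contradicting the requirement $v(\mu)\in\overline{\OC^+(w)}$ from condition~2 at $w$. Consequently $w\in\partial B\setminus\{v(\mu)\}$ and $D\subset\partial B$.

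Now condition~1 at $w$ provides ${\bf u}\in\UC$ with $\varphi(\R_+,w,{\bf u})\subset D\subset\partial B$. Writing $u_1\in\Omega$ for the value of ${\bf u}$ on its initial constant piece $[0,t_1]$, and recalling that $w\in\partial B\setminus\{v(\mu)\}$ cannot be an equilibrium, we have $w\neq v(u_1)$ (whether or not $u_1=\mu$). Proposition~\ref{invariance}(ii) then forces $|\varphi(s,w,u_1)+\eta^*|>|w+\eta^*|=2R$ for every $s\in(0,t_1]$, so the orbit immediately leaves $\partial B$---contradicting its confinement to $D\subset\partial B$. Hence no such $w$ exists, $D=\{v(\mu)\}$, and maximality is proved. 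The principal obstacle is the reduction $D\subset\partial B$: only the one-sided monotonicity of $|\varphi+\eta^*|$ supplied by Proposition~\ref{invariance}(ii)---which prevents forward orbits from reentering $B$---forbids $D$ from extending into $\R^2\setminus B$, and once this dichotomy is established the immediate escape of non-equilibrium points of $\partial B$ closes the argument without recourse to any finer Lyapunov-type estimate.
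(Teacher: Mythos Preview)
Your proof is correct. Both your argument and the paper's hinge on Proposition~\ref{invariance}(ii), but the packaging differs. The paper first invokes the uniqueness discussion from Theorem~\ref{controlaffine} to localize any control set containing $v(\mu)$ inside $\partial\CC_{\R^2}\subset B$, and then uses only condition~2 together with the inclusion $\overline{\OC^+(v(\mu))}\cap B=\{v(\mu)\}$ to conclude. You instead argue directly for maximality: using condition~2 in \emph{both} directions (from $v(\mu)$ and from $w$) you pin any competing set $D$ to $\partial B$, and then condition~1 at a non-equilibrium boundary point produces an immediate escape via Proposition~\ref{invariance}(ii). Your route is more self-contained (it does not appeal to the structure of $\CC_{\R^2}$ or the uniqueness argument of Theorem~\ref{controlaffine}), at the cost of one extra step; the paper's route is shorter but leans on prior results. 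The geometric observation that the equilibria circle is internally tangent to $\partial B$ at $v(\mu)$---so that $v(\mu)$ is the \emph{only} equilibrium on $\partial B$---is exactly what makes your final contradiction with condition~1 work, and is implicit (but not stated) in the paper's argument as well.
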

	
	\begin{proof}
	    If $\mu\in\Omega$, the point $v(\mu)$ is an equilibrium of $\Sigma_{\R^2}$ and hence, there exists a control set $\CC$ with $v(\mu)\in\CC$. If we consider the ball $B$ given in (\ref{bola}), Proposition \ref{invariance} item 1. implies that $\CC_{\R^2}\subset\inner B$ implying necessarily that $$\inner \CC=\emptyset \;\;\;\mbox{ and }\;\;\;\CC\subset\partial\CC_{\R^2}.$$
	    On the other hand, item 2. in Proposition \ref{invariance} implies that 
	    $$|\varphi(s, v(\mu), u)+\eta^*|>|v(\mu)+\eta^*|\;\;\;\mbox{ if }\;\;\;u\neq\mu.$$
	    By concatenation, one concludes that  
	    $$\overline{\OC^+(v(\mu))}\cap B=\{v(\mu)\}\;\;\;\implies\;\;\;\overline{\OC^+(v(\mu))}\cap \partial\CC_{\R^2}=\{v(\mu)\},$$
	    and hence $\CC=\{v(\mu)\}$, ending the proof.
	\end{proof}

	\begin{remark}
	As shown in \cite{DSAy2}, for LCSs on $\R^2$, the whole boundary of the open control set is also a periodic orbit (and hence, a control set). For the control-affine case in this section, the previous result shows that, if $\mu\in\Omega$, the boundary of $\partial\CC_{\R^2}$ cannot by a periodic orbit. However, what happens before $\mu$ enters the control range $\Omega$ is still unknown.
	\end{remark}

\begin{remark}
It is important to observe here that Theorem \ref{controlaffine} and other results concerning control sets were proved in a general context in \cite{CK2}. However, the proof for our particular case is done by a purely geometrical approach. Moreover, such an approach allows us to show that the open control set admits an equilibrium as control set in its boundary.
\end{remark}

\section{Controllability and control sets of LCSs on $SE(2)$}

This section is devoted to stating and proving our main result concerning the control sets of a LCSs on $SE(2)$.

\begin{theorem}
Let $\Sigma_{SE(2)}$ be a LCS on $SE(2)$ with linear vector field $\XC=(\xi, A)$. Assume that $\Sigma_{\R^2}$ satisfies the LARC. It holds:  
\begin{enumerate}
     \item If $\det A=0$ then $\Sigma_{SE(2)}$ admits an infinite numbers of control sets with empty interior.

    \item If $\det A\neq 0$ we have that:
    \subitem 2.1. $\tr A=0$ and the system is controllable;
    \subitem 2.2. $\tr A<0$ and the system admits a unique compact control set $\CC_{SE(2)}$ with nonempty interior;
    \subitem 2.3. $\tr A>0$ and the system admits a unique open control set $\CC_{SE(2)}$ with a nonempty interior. Moreover, if $\mu\in\Omega$ the control set $\CC_{SE(2)}$ admits a periodic orbit in its boundary, which is a control set if $\mu\neq 0$, or $\mu=0$ and any singularity of the drift is a one-point control set on the boundary, of $\CC_{SE(2)}$.
    
\end{enumerate}
\end{theorem}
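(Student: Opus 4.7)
The proof splits according to whether $\det A=0$. When $\det A\neq 0$ (items 2.1--2.3) I would apply Proposition \ref{conjugation} to replace $\Sigma_{SE(2)}$ by the equivalent skew-product $\Sigma_{S^1\times\R^2}$, and observe that the natural projection $\pi:S^1\times\R^2\to\R^2$ is itself a conjugation with $\Sigma_{\R^2}$. Consequently every control set of the skew product projects into a control set of $\Sigma_{\R^2}$, and Theorem \ref{controlaffine} provides the full list of possibilities for the image.

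For the converse (lifting) direction, the key point is that $\dot t=u\alpha$ with $\alpha\neq 0$ and $\Omega$ containing both signs. Given $(t_0,v_0)$ and $(t_1,v_1)$ with $v_0,v_1\in\inner\CC_{\R^2}$, one first steers $v_0$ to $v_1$ by a trajectory entirely inside $\inner\CC_{\R^2}$, which exists by the controllability part of Theorem \ref{controlaffine}; this produces some terminal angle $t(T)$ that generally differs from $t_1$. To correct it, one concatenates with a small closed loop at $v_1$, still lying in $\inner\CC_{\R^2}$, whose control profile $\mathbf{u}$ can be chosen so that $\alpha\int\mathbf{u}\,ds$ realizes any prescribed residue modulo $2\pi$; this is possible because $u$ may take positive and negative values. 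This yields $\pi^{-1}(\CC_{\R^2})=S^1\times\CC_{\R^2}$ as the control set of $\Sigma_{S^1\times\R^2}$, and cases 2.1--2.3 then follow directly from the three cases of Theorem \ref{controlaffine}, with compactness/openness preserved under the lift.

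For the boundary statement in 2.3 with $\mu\in\Omega$, Proposition \ref{singleton} provides the singleton $\{v(\mu)\}\subset\partial\CC_{\R^2}$. If $\mu\neq 0$, the constant control $u=\mu$ gives $\dot t=\mu\alpha\neq 0$ and $v\equiv v(\mu)$, so the orbit $\{(t_0+\mu\alpha s,v(\mu)) : s\in\R\}$ is periodic and sweeps out the whole curve $S^1\times\{v(\mu)\}$ in $\partial\CC_{SE(2)}$; I would verify it is a control set by directly checking Definition \ref{control} and adapting the maximality argument of Proposition \ref{singleton}. If $\mu=0$ then $v(\mu)=0$ and the control $u=0$ fixes every point $(t,0)$, making each such point an equilibrium; pulling these back through $\psi_2^{-1}\circ\psi_1^{-1}$ identifies them with the drift singularities $\{(t,-\frac{1}{\lambda}\Lambda_t\xi) : t\in S^1\}\subset SE(2)$, each of which is a one-point control set by the same singleton argument.

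Finally, for $\det A=0$: the commutation $A\theta=\theta A$ forces $A=\lambda I+\mu\theta$, so $\det A=0$ means $A=0$. Then $\XC(t,v)=(0,\Lambda_t\xi)$ vanishes exactly on $\{0\}\times\R^2$, a 2-dimensional continuum of equilibria. Explicit integration of the constant-$u$ trajectories (using $\xi\neq 0$ from the LARC) shows that after each full rotation of $t$ back to $0$ the $v$-coordinate is shifted by a nonzero vector in the $\theta\xi$-direction, with sign determined by $\mathrm{sign}(u)$. A monotonicity argument on the coordinate $v\cdot\theta\xi$ then separates the equilibria into an infinite family of disjoint candidate control sets with empty interior. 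The hardest point in the whole argument will be the lifting step in case 2.3, where $\CC_{\R^2}$ is open but not of compact closure: verifying that the corrective loops at $v_1$ can be chosen to stay inside $\inner\CC_{\R^2}$ while realizing any desired residue in $S^1$ requires careful use of the full strength of the controllability inside $\CC_{\R^2}=\OC^-(v(u))$.
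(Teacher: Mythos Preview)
Your overall architecture matches the paper's: reduce via Proposition~\ref{conjugation}, project by $\pi_2$ onto $\Sigma_{\R^2}$, invoke Theorem~\ref{controlaffine}, and lift. The $\det A=0$ case is also handled the same way in spirit (the paper likewise uses monotonicity of $\langle v,\theta\xi\rangle$, packaged as positive invariance of the half-spaces $K_{g_0}^+$, and concludes that every control set lies in a slice $\{0\}\times(v_0+\R\cdot\xi)$).

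The one genuine gap is your lifting mechanism in case~2. You propose to steer $v_0\to v_1$ inside $\inner\CC_{\R^2}$ and then correct the $S^1$-coordinate with a ``small closed loop at $v_1$'' realizing an arbitrary holonomy $\alpha\int\mathbf{u}\,ds$ modulo $2\pi$. You flag this as the hardest step, and indeed as stated it is not obvious: you would need to show that the set of holonomies attainable by loops based at $v_1$ and contained in $\inner\CC_{\R^2}$ is all of $S^1$, which is a nontrivial accessibility statement. The paper sidesteps this entirely with a much simpler device: route the trajectory through an \emph{equilibrium} $v(u)\in\inner\CC_{\R^2}$. Concretely, go $(t_1,v_1)\to(t_1^*,v(u))$ by controllability in $\inner\CC_{\R^2}$; then apply the constant control $u$, which keeps the $\R^2$-coordinate fixed at $v(u)$ while the $S^1$-coordinate moves as $t_1^*+us$ and hence can be set to any desired value; finally go $(\,\cdot\,,v(u))\to(t_2,v_2)$. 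This makes the lifting step a one-line computation rather than a holonomy argument.

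Two smaller remarks. First, your worry that in case~2.3 the set $\CC_{\R^2}$ might fail to have compact closure is unfounded: Theorem~\ref{controlaffine} states that $\CC_{\R^2}$ is bounded in all cases, so $\overline{\CC_{\R^2}}$ is compact. Second, for the boundary analysis in~2.3 the paper works entirely in the $S^1\times\R^2$ model (using Proposition~\ref{singleton} together with Proposition~\ref{invariance}(ii) to see that distinct points of $S^1\times\{0\}$ cannot lie in the same control set when $\mu=0$); your pullback through $\psi_2^{-1}\circ\psi_1^{-1}$ to identify the drift singularities is correct but not needed for the argument itself.
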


The proof of our main result will be divided into the following two sections.

\subsection{The case $\det A=0$:}

Since $\det A=0$ and $A\theta=\theta A$ we necessarily have that $A=0$. Therefore, under the LARC (see relation (\ref{LARC})) $\Sigma_{SE(2)}$ is determined by the vectors 
$$\XC(t, v)=(0, \Lambda_t\xi)\;\;\;\mbox{ and }Y^L(t, v)=(\alpha, \rho_t\eta), \;\;\;\alpha\cdot\xi\neq 0,$$
implying that the set of singularities of $\XC$ is $\{0\}\times\R^2$.

The automorphism 
$$\psi:SE(2)\rightarrow SE(2), \;\;\;\;\psi(t, v)=(t, v-\alpha^{-1}\Lambda_t\eta),$$
has a differential given by 
$$(d\psi)_{(t, v)}(a, w)=(a, w-a\alpha^{-1}\rho_t\eta),$$
which satisfies
$$(d\psi)_{(t, v)}\XC(t, v)=\XC(\psi(t, v))\;\;\;\mbox{ and }\;\;\;(d\psi)_{(t, v)}Y^L(t, v)=Z^L(\psi(t, v))=(\alpha, 0).$$
Since $\psi(0, v)=(0, v)$, under the LARC, we can assume w.l.o.g. that $\alpha=1$ and $Y^L(t, v)=(1, 0)$.

From these assumptions, the LCS is given in coordinates as

$$\left\{\begin{array}{l}
     \dot{t}=u\\
     \dot{v}=\Lambda_t \xi
\end{array}\right.$$

In particular, it is not hard to see that 
$$\varphi(s, (0, v), 0)= (0, v), \;\;\;\forall s\in\R,$$
implying that any point in $\{0\}\times\R^2$ is contained in a control set of $\Sigma_{SE(2)}$. In what follows, we show that the control set containing the point $(0, v)$ is contained in the affine space $\{0\}\times (v+\R\cdot\xi)$, which proves that $\Sigma_{SE(2)}$ admits an infinite number of control sets.

In order to show our claim, let us notice that, since for all $t\in S^1$, $\|\rho_t\|=1$, Schwarz's inequality implies that
\begin{equation}
\label{schwarz}
\langle\Lambda_t \xi, \theta\xi \rangle=\langle(1-\rho_t)\theta\xi, \theta\xi\rangle=|\xi|^2-\langle\rho_t\xi, \xi\rangle\geq  |\xi|^2-\|\rho_t\||\xi|^2= 0,
\end{equation}
where equality holds if and only if $t=0$. Moreover, by the uniqueness property, it is straightforward to see that
\begin{equation}
\label{eq3}
\forall s\in\R, t\in S^1, v, w\in\R^2\;\mbox{ and }\;{\bf u}\in\UC\;\;\;\;\;\varphi_2(s, (t, v+w), {\bf u})=\varphi_2(s, (t, v), {\bf u})+w,
\end{equation}
where $\varphi_2$ is the second coordinate function of $\varphi$.

For a given $g_0=(t_0, v_0)\in SE(2)$, let us define the set
$$K_{g_0}^+=\{(t, v)\in SE(2); \langle v-v_0, \theta\xi\rangle\geq 0\}.$$

Taking into consideration equation (\ref{schwarz}), for any ${\bf u}\in\UC$ and $(t, v)\in SE(2)$ with ${\bf u}\not\equiv 0$, the function
$$s\in\R\mapsto H(s):=\langle\varphi_2(s, (t, v), {\bf u}), \theta\xi\rangle,$$
is increasing. As a consequence, if $(t, v)\in K_{g_0}^+$, we get 
$$\langle\varphi_2(s, (t, v), {\bf u})-v_0, \theta\xi\rangle\stackrel{(\ref{eq3})}{=}\langle\varphi_2(s, (t, v-v_0), {\bf u}), \theta\xi\rangle\geq  \langle \varphi_2(0, (t, v-v_0), {\bf u}), \theta\xi\rangle=\langle v-v_0, \theta\xi\rangle\geq 0,$$
for all $s>0$. Therefore, 
$$\forall s>0, \;\;\;\varphi(s, K_{g_0}^+, {\bf u})\subset K_{g_0}^+, \;\;\;\mbox{ and hence }\;\; \overline{\mathcal{O}^+(g_0)}\subset K_{g_0}^+.$$

Next, let us assume that $\CC$ is a control set containing $g_0=(t_0, v_0)$ and let $g_1=(t_1, v_1)\in\CC$ with $v_0\neq v_1$. By approximate controllability 
$$g_0\in\cl(\mathcal{O}^+(g_1))\subset K^+_{g_1}\;\implies\; \langle v_0-v_1, \theta\xi\rangle\geq 0,$$
and  
$$g_1\in\cl(\mathcal{O}^+(g_0))\subset K^+_{g_0}\;\implies\; \langle v_1-v_0, \theta\xi\rangle\geq 0,$$
implying that $\langle v_0-v_1, \theta\xi\rangle=0$ and hence $v_1\in v_0+\R\cdot\xi$. Therefore,
$$\CC\subset S^1\times \left(v_0+\R\cdot\xi\right).$$ 

On the other hand, by the condition one in the definition of control sets, for any $g=(t, v)\in\CC$ there exists ${\bf u}\in\UC$ such that $\varphi(\R^+, g, {\bf u})\subset\CC$ and by the previous discussion, we conclude that
$$\forall s_1, s_2\geq 0, \;\;\;\;\;\varphi_2(s_1, g, {\bf u})-\varphi_2(s_2, g, {\bf u})\in \R\cdot\xi.$$
As a consequence, 
$$\R\cdot\xi \ni\frac{d}{ds}\varphi_2(s, g, {\bf u})=\Lambda_{\varphi_1(s, g, {\bf u})}\xi \;\;\;\implies\;\;\;\forall s>0, \;\;\;\varphi_1(s, g, {\bf u})=0\;\;\implies\;\;\;t=0\;\mbox{ and }\;u=0,$$
showing that 
$$g=(0, v)\;\;\;\;\implies\;\;\;\;\CC\subset \{0\}\times \left(v_0+\R\cdot\xi\right),$$
 which concludes the proof.

\subsection{The case $\det A\neq 0$}

By the LARC assumption, w.l.o.g. we consider $\alpha=1$. Therefore, by Proposition \ref{conjugation}, the system is equivalent to 
\begin{flalign*}
	&&\left\{\begin{array}{l}
     \dot{t}=u\\
     \dot{v}=A(u)v+u\eta
\end{array}\right.,\;\;\;\;\;u\in\Omega &&\hspace{-1cm}\left(\Sigma_{S^1\times\R^2}\right)
\end{flalign*}
where $A(u)=A-u\theta$. Moreover, for any $u\in\Omega$, the solutions starting at $g=(t, v)\in SE(2)$ are given, in coordinates, by
$$
\varphi_1(s, g, u)=t+s u\;\;\;\mbox{ and }\;\;\;\varphi_2(s, g, u)=\rme^{sA(u)}(v-v(u))+v(u),
$$
where $v(u):=-u A(u)^{-1}\eta$ and the sum in the function $\varphi_1$ is module $2\pi$. By induction, one easily obtain that 
\begin{equation}
    \label{eq2}
    \forall t_1, t_2\in S^1, v\in\R^2, \;\;\;\;
    \varphi(s, (t_1+t_2, v), u)=(t_1, 0)+\varphi(s, (t_2, v), u),
\end{equation}
where the sum in $S^1$ is modulo $2\pi$. Now, the fact that $A$ commutes with $\theta$ implies that $A=\left(\begin{array}{cc} \lambda & -\mu \\ \mu & \lambda\end{array}\right)$. By Theorem \ref{controlaffine}, the associated system 
\begin{flalign*}
	&&\dot{v}=(A-u\theta)v+u\eta,\;\;\;\;\;u\in\Omega &&\hspace{-1cm}\left(\Sigma_{\R^2}\right)
\end{flalign*}
admits a unique control set $\CC_{\R^2}$ with nonempty interior, which is positively invariant if 
$\tr A\leq 0$ and negatively invariant if $\tr A\geq 0$\footnote{By Theorem \ref{controlaffine}, if $\tr A=0$ we have that $\CC_{\R^2}=\R^2$ which is certainly invariant in positive and negative times.}. As a consequence, the same property holds for $S^1\times \CC_{\R^2}$, implying that 
$$\mathcal{O}^+(g)\subset S^1\times \CC_{\R^2}, \hspace{2cm}\forall g\in S^1\times \CC_{\R^2}\;\;\mbox{ if }\;\;\;\tr A\leq 0$$
and 
$$\mathcal{O}^-(g)\subset S^1\times \CC_{\R^2}, \hspace{2cm}\forall g\in S^1\times \CC_{\R^2}\;\;\mbox{ if }\;\;\;\tr A\geq 0.$$

Now, a solution connecting two elements $g_1=(t_1, v_1)$ and $ g_2=(t_2, v_2)$ in $S^1\times\inner\CC_{\R^2}=\inner(S^1\times\CC_{\R^2})$ is constructed as follows:

\begin{itemize}

\item[(a)]Let $u\in\Omega$ with $v(u)\in\inner\CC_{\R^2}$. Since controllability holds on $\inner\CC_{\R^2}$, there exists ${\bf u}_1, {\bf u}_2\in\UC$, $s_1, s_2\in\R^+$ such that 
$$\varphi(s_1, g_1, {\bf u}_1)=(t_1^*, v(u))\;\;\;\;\mbox{ and }\;\;\;\;\varphi(s_2, (0, v(u)), {\bf u}_2)=(t_2^*, v_2),$$
for some $t_1^*, t_2^*\in S^1$.

\item[(b)] Since $\varphi(s, (t, v(u)), u)=(t+us, v(u))$, there exists $s_0>0$ such that 
$$\varphi(s_0, (t_1^*, v(u)), u)=(t_2-t_2^*, v(u)).$$

\item[(c)] By concatenation we get
$$\varphi(s_2, \varphi(s_0, \varphi(s_1, g_1, {\bf u}_1), u), {\bf u}_2)=\varphi(s_2, \varphi(s_0, (t_1^*, v(u)), u), {\bf u}_2)=\varphi(s_2, (t_2-t_2^*, v(u)), {\bf u}_2)$$
$$\stackrel{(\ref{eq2})}{=}(t_2-t_2^*, 0)+\varphi(s_2, (0, v(u)), {\bf u}_2)=(t_2-t_2^*, 0)+(t_2^*, v_2)=(t_2, v_2).$$
\end{itemize}

Therefore, 
$$\forall g\in \inner(S^1\times\CC_{\R^2}), \;\;\;\;\OC(g)=\inner(S^1\times\CC_{\R^2}).$$
Proocedings in the same way as in the proof of Theorem \ref{controlaffine}, allows us to conclude that $S^1\times\CC_{\R^2}$ is a control set of $\Sigma_{S^1\times\R^2}$. As a consequence, $S^1\times\CC_{\R^2}$ coincides with $SE(2)$ if $\tr A=0$, it is bounded when $\tr A\neq 0$, closed if $\tr A<0$ and open if $\tr A>0$. Moreover, the fact that the projection
$$\pi_2:S^1\times\R^2\rightarrow \R^2\hspace{1cm} \pi_2(t, v)=v,$$
conjugates $\Sigma_{S^1\times\R^2}$ and $\Sigma_{\R^2}$ implies that $S^1\times\CC_{\R^2}$ is the only control set of $\Sigma_{S^1\times\R^2}$ with nonempty interior.

It remains to show the assertion concerning the control sets on the boundary of $S^1\times\CC_{\R^2}$ when $\tr A>0$ and $\mu\in\Omega$.

\begin{itemize}

\item[1.] If $\mu\neq 0$, we obtain 
$$\varphi\left(s+\frac{2k\pi}{\mu}, (t, v(\mu)), \mu\right)=(t+(s\mu+2k\pi), v(\mu))=(t+s\mu, v(\mu))=\varphi(s, (t, v(\mu)), \mu),$$
showing that $S^1\times\{v(\mu)\}$ is a periodic orbit. In particular, $S^1\times\{v(\mu)\}$ is contained in a control set of $\Sigma_{S^1\times\R^2}$. On the other hand, the singleton $\{v(\mu)\}$ is, by Proposition \ref{singleton},  a control set of $\Sigma_{\R^2}$. Therefore, the fact that  $\pi_2(S^1\times\{v(\mu)\})=\{v(\mu)\}$ implies necessarily that $S^1\times\{v(\mu)\}$ is a control set of $\Sigma_{S^1\times\R^2}$.

\item[2.] If $\mu=0$ it follows that $\varphi(s, (t, 0), 0)=(t, 0)$ for any $t\in S^1$. Therefore, any point in $S^1\times\{0\}$ is contained in a control set of $\Sigma_{S^1\times\R^2}$. However, by (ii) in Proposition \ref{invariance}, if we consider $u\in\Omega$ with $u\neq 0$ the curve $s\in\R^+\mapsto\varphi_2(s, (t, 0), u)$ always increases its distance from $0\in\R^2$. As a consequence, the same happens for the solution of $\Sigma_{S^1\times\R^2}$ with relation to $S^1\times\{0\}$.  Therefore, two distinct points in $S^1\times\{0\}$ can be in the same control set. Therefore, $\{(t, 0)\}$ is a control set for any $t\in S^1$, concluding the proof.
\end{itemize}

\begin{remark}
The previous result shows that, under the LARC, any LCS on $SE(2)$ whose associated linear vector field $\XC=(A, \xi)$ satisfies $\det A\neq0$ admits a control set $\CC_{SE(2)}$ with nonempty interior satisfying $(0, 0)\in\overline{\CC_{SE(2)}}$. Moreover, $(0, 0)\in\inner\CC_{SE(2)}$ if and only the eigenvalues of $A$ are not real.
\end{remark}

\begin{remark}
The previous result show also a nice bifurcation behavior of the periodic orbit on the boundary of $\CC_{SE(2)}$. In fact, if the matrix $A$ has eigenvalues $\lambda\pm i\mu$, with $\lambda>0$ and $\mu\neq0$, the periodic orbit on the boundary of the open control set $\CC_{SE(2)}$ has period $\frac{2\pi}{\mu}$. Since,
$$\mu\rightarrow 0\;\;\;\implies\;\;\;\left|\frac{2\pi}{\mu}\right|\rightarrow+\infty,$$
we get that as $\mu$ approachs zero  the period of the orbit goes to infinity and such periodic orbit turns into a continuum of one points control sets.
\end{remark}

\end{document}